\theoremstyle{plain}
\newtheorem{theorem}{Theorem}[section]
\newtheorem*{theorem*}{Theorem}
\newtheorem{lemma}[theorem]{Lemma}
\theoremstyle{remark}
\newtheorem{remark}[theorem]{Remark}
\numberwithin{equation}{section}
\theoremstyle{definition}
\newtheorem{definition}[theorem]{Definition}
\numberwithin{equation}{section}
\newcommand\quant{\advance\quantno by1
                      \ifnum\quantno=1\qquad\else\quad\fi\forall }
\newcommand\itemno[1]{(\romannumeral #1)}
\newcommand\rest[1]{\kern-.1em
          \lower.5ex\hbox{$\scriptstyle #1$}\kern.05em}
\newcommand\set[1]{{\left\{#1\right\}}}
\renewcommand\mod[1]{\vert{#1}\vert}
\newcommand\bigmod[1]{\bigl\vert{#1}\bigr|}
\newcommand\norm[2]{{\Vert{#1}\Vert_{#2}}}
\newcommand\bignorm[2]{\left.{\big\Vert{#1}\big\Vert_{#2}}\right.}
\newcommand\bignormto[3]{{\big\Vert{#1}\big\Vert^{#3}_{#2}}}
\newcommand\Bignorm[2]{\left.{\Big\Vert{#1}\Big\Vert_{#2}}\right.}
\newcommand\prodo[2]{\left\langle#1,#2\right\rangle}
\newcommand\wrt{\,\text{\rm d}}
\newcommand\Riesz{{\operatorname{\mathcal{R}}^d}}
\newcommand\RieszOne{{\operatorname{\mathcal{R}}^1}}
\newcommand\shRiesz{{\operatorname{\mathcal{R}}_c^d}}
\newcommand\scalarshRiesz{{\operatorname{\mathcal{R}}_{c,\cZ}^d}}
 \newcommand\br{\mathbf{r}}
\newcommand\bW{\mathbf{W}}
\newcommand\BC{\mathbb{C}}
\newcommand\BR{\mathbb{R}}
\newcommand\BX{\mathbb{X}}
\newcommand\cA{\mathcal{A}}   
\newcommand\fra{\mathfrak{a}}  
\newcommand\frb{\mathfrak{b}} 
\newcommand\frc{\mathfrak{c}} 
\newcommand\cD{\mathcal{D}}
                               \newcommand\frg
\newcommand\cH{\mathcal{H}}   
\newcommand\frh{\mathfrak{h}}
\newcommand\frk{\mathfrak{k}} 
\newcommand\cL{\mathcal{L}}
\newcommand\frn{\mathfrak{n}}
\newcommand\frp{\mathfrak{p}}  
\newcommand\cR{\mathcal{R}}   
\newcommand\cS{\mathcal{S}}
\newcommand\cZ{\mathcal{Z}}
\newcommand\KGK{{K \backslash G/K}}
\newcommand\al{\alpha}
\newcommand\be{\beta}
\newcommand\ga{\gamma}    \newcommand\Ga{\Gamma}
\newcommand\de{\delta}
  \newcommand\vep{\varepsilon}
\newcommand\la{\lambda}
\newcommand\si{\sigma} \newcommand\Si{\Sigma} 
\newcommand\vp{\varphi}
\newcommand\OV{\overline}
\newcommand\funnyk{k\hbox to 0pt{\hss\phantom{g}}}
\newcommand\lu[1]{L^1(#1)}
\newcommand\lp[1]{L^p(#1)}
\newcommand\ld[1]{L^2(#1)}
\newcommand\hu[1]{H^1(#1)}
\newcommand\ghu[1]{\frh^1(#1)}
\newcommand\floor[1]{\lfloor#1\rfloor}
\newcommand\bc{\mathbf{c}}
\newcommand\wt{\widetilde}
\newcommand\whH{\widehat{\phantom{G}}\hbox to 0pt{\hss $H$}}
\newcommand\ad{\text{ad}}
\newcommand\Ad{\text{Ad}}
\newcommand\emspace{\hbox to 6pt{\hss}}
\newcommand\ds{\displaystyle}
\newcommand\Xu[1]{X^1(#1)}
\newcommand\Xh[1]{X^k(#1)}
\newcommand\rmi{\hbox{\rm (i)}}
\newcommand\rmii{\hbox{\rm (ii)}}
\newcommand\rmiii{\hbox{\rm (iii)}}
\newcommand\One{{\mathbf{1}}}
\newcommand\e{\mathrm{e}}
\newcommand\sft[1]{\wt{#1}}
\newcommand\planc{\mod{\mathbf{c}(\la)}^{-2} \wrt \la}
\newcommand\supp{\operatorname{\mathrm supp}}
\DeclareSymbolFont{EUEX}{U}{euex}{m}{n}
\DeclareSymbolFont{euexlargesymbols}{U}{euex}{m}{n}
\DeclareMathSymbol{\intop}{\mathop}{euexlargesymbols}{"52}
     \def\int{\intop\nolimits}
\DeclareSymbolFont{euexsymbols}     {U}{euex}{m}{n}
\title[Higher order Riesz transforms]
{Higher order Riesz transforms \\
on noncompact symmetric spaces}
\author[G. Mauceri, S. Meda and M. Vallarino]
{Giancarlo Mauceri, Stefano Meda and Maria Vallarino}
\thanks{Work partially supported by PRIN 2015 ``Real and complex manifolds: 
geometry, topology and harmonic analysis". The authors are members of the 
Gruppo Nazionale per l'Analisi Matematica, la Probabilit\`a e 
le loro Applicazioni (GNAMPA) of the Istituto Nazionale di Alta Matematica (INdAM)
}
\address{Giancarlo Mauceri \\
Dipartimento di Matematica \\ Universit\`a di Genova\\
via Dodecaneso 35, 16146 Genova, \\ Italy}
\email{mauceri@dima.unige.it}
\address{Stefano Meda\\
Dipartimento di Matematica e Applicazioni \\ 
Universit\`a di Milano-Bicocca\\
via R.~Cozzi 53,  I-20125 Milano, \\ Italy}
\email{stefano.meda@unimib.it}
\address{Maria Vallarino \\
Dipartimento di Scienze Matematiche ``Giuseppe L. Lagrange" \\ 
Politecnico di Torino\\ Corso Duca degli Abruzzi 24, \\ I-10129 Torino, \\ Italy}
\email{maria.vallarino@polito.it}
\subjclass[2010]{30H10, 42B15, 42B20, 53C35} 
\keywords{Hardy spaces, atoms, noncompact symmetric spaces,
Riesz transforms, spherical multipliers}
\begin{document}

\begin{abstract}
In this note we prove various sharp boundedness results on suitable Hardy type spaces for 
Riesz transforms of arbitrary order on noncompact symmetric spaces of arbitrary rank.
\end{abstract}

\maketitle

\setcounter{section}{0}
\section{Introduction} \label{s: Introduction}

Suppose that $\BX$ is a Riemannian symmetric spaces of the noncompact type,
and denote by~$\nabla$ the covariant derivative and by~$\cL$ the Laplace--Beltrami operator on $\BX$.  
%, which we think of as an unbounded self-adjoint operator on $\ld{\BX}$.
The purpose of this paper is to prove estimates 
on the Hardy type spaces $\hu{\BX}$ and $\Xh{\BX}$  
for the (higher order) \emph{Riesz transform} $\Riesz$, defined, for any positive integer~$d$,~by
\begin{equation} \label{f: shRiesz Riesz}
%\shRiesz 
%= \nabla^d (\cL+c)^{-d/2} 
%\qquad\quad
\Riesz
= \nabla^d \cL^{-d/2},  
\end{equation} 
and the \emph{shifted Riesz transform}
$\shRiesz$, where $c>0$, defined by 
\begin{equation} \label{f: shRiesz Riesz}
\shRiesz 
= \nabla^d (\cL+c)^{-d/2}.  
\end{equation} 
Here $\hu{\BX}$ is the space
introduced by A.~Carbonaro, Mauceri and Meda in \cite{CMM}, and $\Xh{\BX}$
denotes the space introduced in \cite{MMV}, and further investigated in the series of papers
\cite{MMV3,MMVdual,MMV5}.  

Riesz transforms on $\BX$ have been the object of 
a number of investigations in the last forty years, or so.  Without any pretence of 
exhaustiveness, we recall the works of J.-Ph.~Anker
and his collaborators \cite{A1,AJ} and of N.~Lohou\'e \cite{L1,L2}. 
For more on the analysis of Riesz transforms on a wider class of
Riemannian manifolds with spectral gap and bounded geometry, see \cite{MMV3,MMVdual,MMV5}
and the references therein.   ALTRO? 

To the best of our knowledge, there are very few endpoint results for $p=1$ 
for (higher order) Riesz transforms on nondoubling Riemannian manifolds.  
%These results complement the following known boundedness properties of Riesz transforms. 
Nonshifted Riesz transforms of order $d$ are known to be bounded
from $\lp{\BX}$ to the space $\lp{\BX;T^d}$ of all $p$-integrable 
covariant tensors of order $d$ on $\BX$ for every $p$ in $(1,\infty)$ \cite{A1,AJ}. 
See also \cite{L1}, where a similar result is proved for Riesz transforms of
even order on certain Cartan--Hadamard manifolds. 
Anker \cite{A1} also proved that the first and second order Riesz transforms
are of weak type~$1$, and observed that this is no longer true of Riesz transforms of order 
$\geq 3$, at least in the rank one case.  
The same is presumably true in any noncompact symmetric space.  
Anker's proof relies on very fine estimates of the heat kernel and its derivatives.  
So does the proof of our main result concerning $\Riesz$.  
In particular, the key point in our argument is to obtain good estimates of the kernel of the operator 
$\nabla^d \cL^{1/2}$ (see Lemma~\ref{l: B4Bc}).   
We are not able to prove similar estimates on a more general class of manifolds:
this is the reason for which we restrict our analysis to the case of noncompact symmetric spaces.    
%Amongst them
%possibly the most notable is due to J.-Ph.~Anker, who showed that $\Riesz$ is 
%of weak type~$1$ on noncompact symmetric spaces when $d \leq 2$.  He also mentioned
%that $\Riesz$ is not of weak type $1$ on rank one symmetric spaces as long as $d\geq 3$,
%and conjectured that the same is true on higher rank symmetric spaces.   I
%Certain interesting endpoint estimates for the first order Riesz transform
%on a subclass of Cartan--Hadamard manifolds were proved by Lohou\'e \cite{L3}.  
%In the pioneering works of Stein~\cite{St} and R.S.~Strichartz~\cite{Str}, 
%the authors proposed to extend some of the aforementioned results to Riemannian manifolds.
We prove the following:
\begin{enumerate}
\item[\itemno1] suppose that $d$ is a positive integer.  Then
\begin{enumerate}
\item[(a)]
$\shRiesz$ is bounded from $\hu{\BX}$ to $\lu{\BX;T^d}$; 
%of all integrable covariant tensors of order $d$ on $\BX$;
\item[(b)]
$\Riesz$ is bounded from $\Xh{\BX}$ to $\lu{\BX;T^d}$ 
for every $k \geq \floor{(d+1)/2}$;  
\end{enumerate}
\item[\itemno2]
$\RieszOne$ is unbounded from $\hu{\BX}$ to $\lu{\BX;T^1}$.  
\end{enumerate}
Part \rmi\ is proved in Theorem~\ref{t: RT} and part~\rmii\ in Theorem~\ref{t: notH^1L^1}. 
The remarkable difference between the boundedness properties of $\shRiesz$ and $\Riesz$
on $\hu{\BX}$ and $\Xh{\BX}$  has a simple explanation.  
Fix a base point $o$ in $\BX$.  On the one hand, the ``convolution
kernels'' of $\shRiesz$ and $\Riesz$ have a similar
behaviour in a neighbourhood of $o$, where they are homologous to a kernel of a  
standard singular integral operator.  
On the other hand, the kernel of $\shRiesz$ is integrable at infinity, 
whereas that of $\Riesz$ is not. 
Furthermore, the greater the order $d$ is, the slower decay the kernel 
of $\Riesz$ has at infinity.   

%Observe that $\BX$ is \emph{not} a space of homogeneous type in the sense of 
%Coifman and Weiss, i.e., the doubling condition fails (for large balls).  Hence
%the theory of classical Hardy spaces, or rather its generalisation 
%to spaces of homogeneous type \cite{CW2}, does not apply to our setting.  The problem 
%of constructing a ``reasonable" theory of spaces of Hardy type on $\BX$ has remained open
%for quite a long time. 

We emphasize that the results in this paper 
aim at corroborating the fact that $\Xh{\BX}$ does serve as an effective counterpart 
on $\BX$ of the classical Hardy space $\hu{\BR^n}$, 
whereas the effectiveness of the space $\hu{\BX}$ is 
somewhat limited to operators whose kernels are integrable at infinity.    
It is important to keep in mind that 
the following strict continuous containments hold   
$$
\hu{\BX}
\supset \Xu{\BX} \supset X^2(\BX) \supset \cdots \supset \Xh{\BX} \supset \cdots
$$    
There is a huge literature concerning Hardy type spaces on Riemannian manifolds, and 
sometimes a bit of confusion about their effectiveness.  
%We shall explain in due course why 
%other Hardy type spaces, such as the Goldberg type space $\ghu{\BX}$ 
%introduced by M.~Taylor \cite{T2} or the Hardy type space introduced by Carbonaro, A. McIntosh
%and A.~Morris \cite{CMcIM}, are not apt at obtaining endpoint results for 
%Riesz transforms of any order in our setting.  
As already mentioned, we prove that shifted Riesz transforms of order $d$ are bounded
from $\hu{\BX}$ to $\lu{\BX,T^d}$. %and that certain linearised versions thereof 
%map $\hu{\BX}$ into itself (Theorem~3.1~\rmi-\rmii).   
We would like to make it clear that
we could have proved similar results
involving Taylor's version of Goldberg's Hardy space $\ghu{\BX}$ \cite{T2} or the space of 
Carbonaro, A.~McIntosh and A.~Morris \cite{CMcIM} instead of the space $\hu{\BX}$.  
All these spaces may be used to give endpoint results for $p=1$ for operators with 
kernels that are integrable at infinity and behave, roughly speaking, as Calder\'on--Zygmund 
operators near the origin.  
However, none of these is apt to serve as an endpoint result for nonshifted Riesz transforms
(of whatsoever order)
%and spherical Fourier multipliers like those we consider 
in the setting of noncompact symmetric spaces.  
In our paper we show that the nonshifted Riesz transform $\RieszOne$
%and the purely imaginary powers of the Laplace--Beltrami operator 
does not map $\hu{\BX}$ into $\lu{\BX}$.  \textit{A fortiori},
it cannot map Taylor's space $\ghu{\BX}$, or the Carbonaro--McIntosh--Morris space, into $\lu{\BX}$.  

The paper is organised as follows.  Section~\ref{s: symmetricspacesH1} contains the basic 
notions of analysis on~$\BX$ and the definitions of the Hardy spaces $H^1(\BX)$ and $X^k(\BX)$.
In Section~\ref{s: Riesz} we prove the positive results for the 
Riesz transforms (see Theorem \ref{t: RT}).  
Finally, in the last section we prove that the Riesz 
potentials $\cL^{-\tau}$, $\tau>0$, and the first order Riesz transform 
$\RieszOne$ are unbounded from $H^1(\BX)$ to $L^1(\BX)$.  

We will use the ``variable constant convention'', and denote by $C,$
possibly with sub- or superscripts, a constant that may vary from place to
place and may depend on any factor quantified (implicitly or explicitly)
before its occurrence, but not on factors quantified afterwards. 

\section{Preliminaries}\label{s: symmetricspacesH1}
 
\subsection{Preliminaries on symmetric spaces}\label{s: symmetricspaces}  

In this subsection we recall the basic notions of analysis on noncompact 
symmetric spaces that we shall need in the sequel. Our main 
references are the books \cite{H1, H2} and the papers \cite{A2,A1, AJ}. 
For the sake of the reader we recall also the notation, which is quite 
standard. 

We denote by $G$ a noncompact connected real semisimple Lie group
with finite centre, by $K$ a maximal compact subgroup and by $\BX=G/K$ 
the associated noncompact Riemannian symmetric space. 
The point $o=e{K}$, where $e$ is the identity of $G$, is called the \emph{origin} in $\BX$. 
Let $\theta$ and $\frg=\frk\oplus\frp$ be 
the corresponding Cartan involution and Cartan decomposition of the  Lie 
algebra $\frg$ of $G$, and $\fra$ be a maximal abelian subspace of $\frp$. 
We denote by $\Si$ the restricted root system of $(\frg,\fra)$ and by $W$ the 
associated Weyl group.  Once a positive Weyl chamber $\fra^+$ has 
been selected, $\Si^+$ denotes the corresponding set of positive 
roots, $\Si_s$ the set of simple roots in $\Si^+$ and $\Si_0^+$ 
the set of positive indivisible roots. As usual, 
$\frn=\sum_{\al \in \Si^+} \frg_{\al}$ denotes
the sum of the positive root spaces.  Denote by $m_\al$ the dimension of $\frg_{\al}$
and set $\rho := (1/2) \sum_{\al \in \Sigma^+} \, m_\al\, \al$.  
We denote by $\bW$ the interior of the convex hull
of the points $\set{w \cdot \rho : w \in W}$. Clearly $\bW$ is an open 
convex polyhedron in $\fra^*$.
By $N=\exp \frn$ and  $A=\exp \fra$  we denote the analytic 
subgroups of $G$ corresponding to $\frn$ and $\fra$. 
The Killing form $B$ induces the $K$--invariant inner 
product $\prodo{X}{Y} = -B\big(X,\theta(Y)\big)$ on $\frp$ and 
hence a $G$--invariant metric $d$ on $\BX$. 
The ball with centre $x\cdot o$ and radius $r$ will be denoted by $B_r(o)$.  
The map $X\mapsto \exp X \cdot o$ is a diffeomorphism of $\mathfrak{p}$ onto $\BX$. 
The distance of $\exp X\cdot o$ from the origin in $\BX$ is equal to $\mod{X}$,
and will be denoted by $\mod{\exp X\cdot o}$ .  We denote by $n$ the dimension of $\BX$ and 
by $\ell$ its rank, i.e. the dimension of $\fra$. 

We identify functions on the symmetric space $\BX$ with
$K$--right-invariant functions on $G,$ in the usual way. If $E(G)$ 
denotes a space of functions on $G$, we
define $E({\BX})$  and $E({K\backslash{\BX}})$ to be the closed 
subspaces of $E(G)$ of the $K$--right-invariant and the $K$--bi-invariant functions, respectively. 
If $D=Z_1Z_2\cdots Z_d$, with $Z_i\in\frg$, then we denote by $Df(x)$ the right
differentiation of $f$ at the point $x$ in $G$.  Thus,
\begin{align*}
Df(x)
& = \frac{\partial^d}{\partial t_1\cdots\partial t_d}
      f\big(x\exp(t_1Z_1)\cdots\exp(t_d Z_d)\big)_{\big\vert_{t_1=\ldots=t_d=0}}.
\end{align*}
We write $\wrt x$ for a Haar measure on $G$, and let $\wrt k$ be the
Haar measure on $K$ of total mass one.  
The Haar measure of $G$ induces a $G$--invariant measure $\mu$ on $\BX$ for which
$$
\int_{\BX} f(x\cdot o) \, \wrt\mu (x\cdot o) 
= \int_G f(x) \, \wrt x
\quant f \in C_c({\BX}).
$$
We shall often write $\bigmod{E}$ instead of $\mu(E)$ for a measurable subset $E$ of $\BX$.  
We recall that
\begin{equation}\label{intCartan}
\int_G f(x)\wrt x 
= \int_K\!\int_{\fra^+}\!\int_K 
f\bigl(k_1\exp H \,k_2\bigr)\, \de(H) \wrt k_1 \wrt H \wrt k_2 \,,
\end{equation}
where $\wrt H$ denotes a suitable nonzero multiple of the Lebesgue
measure on $\fra$, and 
\begin{equation}\label{delta}
\delta(H)=
\prod_{\al \in\Sigma^+} \big(\sinh\al(H)\big)^{m_{\al}} 
\leq C\, \e^{2\rho(H)}
\quant H \in \fra^+.
\end{equation}
We recall the Iwasawa decomposition of $G$, which is $G=K\,A\,N$. 
For every $x$ in $G$ we denote by 
$H(x)$ the unique element of $\mathfrak{a}$ such that $x\in K\exp H(x) N$.
For any linear form $\la: \fra \to \BC$, 
the elementary spherical function $\vp_\la$ is defined by the rule
$$
\vp_\la (x) = \int_K \e^{-(i\la + \rho) H(x^{-1}k)} \wrt k
\quant x \in G.
$$
In the sequel we shall use the following estimate of the spherical function 
$\vp_0$ \cite[Proposition 2.2.12]{AJ}:
\begin{equation}\label{phi0}
\vp_0(\exp H\cdot o)
\leq (1+\mod{H})^{|\Sigma_0^+|}\,\,  \e^{-\rho(H)}\quant H\in\fra^+.
\end{equation}
The spherical transform $\cH f$ of an $\lu{G}$ function $f$, 
also denoted by $\sft f$, is defined by the formula
$$
\cH f(\la) 
= \int_G f (x) \, \phi_{-\la} (x) \wrt x
\quant \la \in \fra^*.
$$
Harish-Chandra's inversion formula and Plancherel formula state
that for ``nice'' $K$--bi-invariant functions $f$ on $G$
\begin{equation} \label{f: inversion}
f(x) = \int_{\fra^*} \sft f (\la) \, \phi_\la(x)\, \wrt\nu(\la )
\quant x \in G
\end{equation}  
and
$$
\norm{f}2 = \left[\int_{\fra^*} \mod{\sft f(\la)}^2\, 
\wrt\nu(\la ) \right]^{1/2}
\quant f\in \ld {\KGK},
$$
where $\wrt\nu(\la ) = c_{{}_G} \planc $, and $\bc$ denotes 
the Harish-Chandra $\bc$-function.  
We do not need the exact form of $\bc$.  It will be enough to know that there
exists a constant~$C$ such that 
\begin{equation}\label{f: estimatec}
\mod{\bc(\la)}^{-2} 
\leq C\, \bigl(1+\mod{\la}   \bigr)^{n-\ell},
\end{equation}
\cite[IV.7]{H1}.  
%The spherical transform can be factored as follows $\cH = \cF\cA$,
%where $\cA$ is the Abel transform, defined by 
%$$
%\cA f(H)=\e^{\rho(H)}\,\int_Nf\big((\exp H)n\big)\wrt n 
%\qquad \forall H\in\fra\,,
%$$
%and $\cF$ denotes the Euclidean Fourier transform on $\fra$.

Next, we recall the Cartan decomposition of $G$, which is $G=K\exp \OV{\fra^+}K$.  
In fact, for almost every~$x$ in $G$, 
there exists a unique element $A^+(x)$ in $\fra^+$ such that 
$x$ belongs to $K \exp A^+(x) K$. 

\begin{lemma} \label{l: Huc}
The map $A^+:G\to \fra$ is Lipschitz with respect to both 
left and right translations of $G$.  More precisely
$$
\mod{A^+(yx)-A^+(y)}\le  \ d(x\cdot o,o)\qquad \text{and}\qquad \mod{A^+
(xy)-A^+(y)}\le \ d(x\cdot o,o)\,,
$$
for all $x$ and $y$ in $G$.
\end{lemma}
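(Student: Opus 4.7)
The plan is to reduce both inequalities of the lemma to a single Lipschitz estimate for the Cartan projection viewed as a map on $\BX$, namely
$$|A^+(g_1)-A^+(g_2)| \le d(g_1\cdot o,g_2\cdot o) \qquad \forall\, g_1,g_2\in G, \qquad (\star)$$
where $A^+$ descends to $\BX$ by right $K$-invariance. Granted $(\star)$, the right-translation bound is immediate: taking $g_1=yx$, $g_2=y$ and using the $G$-invariance of $d$ gives $d(yx\cdot o,y\cdot o)=d(x\cdot o,o)$. For the left-translation bound I would use the inversion identity $A^+(g^{-1})=\sigma A^+(g)$, where $\sigma:=-w_0$, with $w_0$ the longest element of $W$; since $w_0$ sends $\overline{\fra^+}$ onto $-\overline{\fra^+}$, $\sigma$ is an orthogonal involution of $\fra$ preserving $\overline{\fra^+}$. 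The identity follows from the Cartan decomposition of $g$ after conjugating $\exp(-A^+(g))$ by a lift of $w_0$ in $N_K(\fra)$. Because $\sigma$ is an isometry of $\fra$,
$$|A^+(xy)-A^+(y)| \ =\ |\sigma A^+((xy)^{-1})-\sigma A^+(y^{-1})|\ =\ |A^+(y^{-1}x^{-1})-A^+(y^{-1})|,$$
and applying the right-translation case to the pair $(y^{-1},x^{-1})$ yields $d(x^{-1}\cdot o,o)=d(x\cdot o,o)$, as required.

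To prove $(\star)$ I would use the $K$-bi-invariance of $A^+$ and the $K$-invariance of $d(\cdot,o)$ to arrange $g_1=\exp H_1$ and $g_2\cdot o = k\exp H_2\cdot o$ for some $k\in K$, with $H_i:=A^+(g_i)\in\overline{\fra^+}$. Since $k\cdot o=o$, one rewrites $k\exp H_2\cdot o = \exp(Y)\cdot o$ with $Y:=\Ad(k) H_2\in\frp$. Two ingredients then close the argument. First, since $\BX$ is a Hadamard manifold, the exponential map $\exp_o:\frp\to\BX$ is distance-non-decreasing (Cartan--Hadamard/Rauch comparison), giving
$$d(\exp H_1\cdot o,\,\exp Y\cdot o)\ \ge\ |H_1-Y|_\frp.$$
Second, $\Ad(k)$ acts orthogonally on $\frp$, so $|Y|=|H_2|$ and
$$|H_1-Y|^2 - |H_1-H_2|^2\ =\ 2\,\langle H_1,\,H_2-Y\rangle.$$
It remains to prove $\langle H_1,Y\rangle \le \langle H_1,H_2\rangle$. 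By Kostant's linear convexity theorem, the orthogonal projection $\pi_\fra(\Ad(K) H_2)$ equals $\mathrm{conv}(W\cdot H_2)$, so, since $H_1\in\fra$, $\langle H_1,Y\rangle=\langle H_1,\pi_\fra Y\rangle$ lies in the convex hull of $\{\langle H_1,wH_2\rangle:w\in W\}$. The Weyl chamber inequality $\langle H_1,wH_2\rangle\le\langle H_1,H_2\rangle$ for all $w\in W$---which holds because $(1-w)H_2$ is a nonnegative combination of positive roots, and $H_1\in\overline{\fra^+}$ pairs nonnegatively with each positive root---completes the proof.

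The main obstacle is the joint invocation of Kostant's linear convexity theorem and the Weyl chamber inequality, which is where the genuine symmetric-space structure enters beyond the CAT$(0)$ comparison supplied by Cartan--Hadamard. The remaining ingredients---reducing the two stated bounds to $(\star)$, the inversion symmetry $A^+(g^{-1})=-w_0A^+(g)$ read off from the Cartan decomposition, and the distance-non-decrease of $\exp_o$ in nonpositive curvature---are either purely formal or classical.
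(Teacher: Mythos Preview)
Your proposal is correct and follows the same overall architecture as the paper's proof: both deduce the right-translation bound from the basic inequality $(\star)$ together with the $G$-invariance of $d$, and both reduce the left-translation bound to the right-translation one via the inversion identity $A^+(g^{-1})=-w_0\,A^+(g)$.

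The only genuine difference is in how $(\star)$ is obtained. The paper simply invokes \cite[Lemma~2.1.2]{AJ} for the inequality $\mod{A^+(yx)-A^+(y)}\le d(yx\cdot o,y\cdot o)$, whereas you supply a self-contained proof: first the Cartan--Hadamard comparison $d(\exp H_1\cdot o,\exp Y\cdot o)\ge \mod{H_1-Y}$, then Kostant's linear convexity theorem together with the Weyl-chamber inequality $\prodo{H_1}{wH_2}\le\prodo{H_1}{H_2}$ to obtain $\mod{H_1-Y}\ge\mod{H_1-H_2}$. Your argument is sound and has the merit of being explicit; the paper's approach is shorter but relies on the external reference.
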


\begin{proof}
The first inequality follows from $\mod{A^+(yx)-A^+(y)}\le  \ d(yx\cdot o,y
\cdot o)$, see \cite[Lemma 2.1.2]{AJ}, and the $G$--invariance of the 
metric $d$ on $\BX$. 

The second inequality follows from the first, 
for $A^+(x^{-1})=-\si\,A^+(x)$, where $\si$ is the element of the Weyl 
group that maps the negative Weyl chamber $-{\fra^+}$ to 
the positive Weyl chamber~${\fra^+}$.
\end{proof}

\noindent
For every positive $r$ we define
\begin{equation} \label{f: frbj}
%\begin{cases}
\frb_r  = \{H\in\fra: \mod{H}\leq r\}    
\qquad\hbox{and}\qquad  B_r   = K(\exp \frb_r)K.  
%\frb'_r = \{H\in\fra: (w\cdot \rho)(H)\leq |\rho|r\,\, \hbox{for all $w\in W$}  \} 
%                                         & B'_r = K(\exp \frb'_r)K.
%\end{cases}
\end{equation}
The set $B_r$ is the inverse image under the canonical projection $\pi:G \to \BX$ 
of the ball $B_r(o)$ in the symmetric space $\BX$. 
Thus, a function $f$ on $\BX$ is supported in $B_r(o)$ if and only if, as a 
$K$--right-invariant function on $G$, is supported in $B_r$. 

\subsection{Hardy spaces on $\BX$}\label{s: H1}

In this subsection we briefly recall the definitions and properties 
of $H^1(\BX)$ and $X^k(\BX)$. 
For more about $H^1(\BX)$ and $X^k(\BX)$ we refer the reader to \cite{CMM}
and \cite{MMV, MMV3, MMVdual}, respectively.

\begin{definition}
An \emph{$H^1$-atom} is a function $a$ in $\ld{\BX}$, with support
contained in a ball~$B$ of \emph{radius at most $1$}, and such that  
\begin{enumerate}
\item[\itemno1]
$\int_B a \wrt \mu  = 0$;
\item[\itemno2]
$\norm{a}{2}  \leq \bigmod{B}^{-1/2}$.
\end{enumerate}
\end{definition}
\begin{definition}
The \emph{Hardy space} $H^{1}({\BX})$ is the space of all functions~$g$ in $\lu{\BX}$
that admit a decomposition of the form
\begin{equation} \label{f: decomposition}
g = \sum_{j=1}^\infty c_j\, a_j,
\end{equation}
where $a_j$ is an $H^1$-atom, and $\sum_{j=1}^\infty \mod{c_j} < \infty$.
Then $\norm{g}{H^{1}}$ is defined as the infimum of $\sum_{j=1}^\infty \mod{c_j}$
over all decompositions (\ref{f: decomposition}) of $g$.  
\end{definition}

\begin{remark} \label{rem: economical dec}
A straightforward consequence of \cite[Lemma~5.7]{MMV} that we shall use repeatedly in the sequel is the following.  If $f$ is in $\ld{\BX}$, 
its support is contained in $B_R(o)$ for some $R>1$, and its integral vanishes, 
then $f$ is in $\hu{\BX}$, and 
$$
\bignorm{f}{H^1}
\leq C \, R \, \bigmod{B_R(o)}^{1/2} \, \bignorm{f}{2}.  
$$
\end{remark}

\noindent
The Hardy type spaces $\Xh{\BX}$ were introduced in \cite{MMV} as certain Banach 
spaces isometrically isomorphic to $\hu{\BX}$.  An atomic characterisation
of $\Xh{\BX}$ was then established in \cite{MMV3}, and refined in \cite{MMVdual}.   
In this paper we adopt the latter as the definition of~$\Xh{\BX}$.   
We say that a (smooth) function $Q$ on $\BX$ is $k$--quasi-harmonic if $\cL^k Q$
is constant on~$\BX$.  

\begin{definition}
Suppose that $k$ is a positive integer. 
An \emph{$X^k$-atom} is a function $A$, with support contained in a ball $B$
of \emph{radius at most $1$}, such that
\begin{enumerate}
\item[\itemno1]
$\int_{\BX} A \,\, Q \wrt \mu = 0$ for every $k$--quasi-harmonic function $Q$;
\item[\itemno2]
$\ds\norm{A}{2}\leq \bigmod{B}^{-1/2}$.
\end{enumerate}
Note that condition \rmi\ implies that 
$\int_{\BX} A\wrt\mu=0$, because the constant function $1$ is~$k$--quasi-harmonic
on $\BX$. 
\end{definition}

\begin{definition} 
The space $X^k(\BX)$ is the space of all functions $F$ of the 
form $\sum_j c_j\, A_j$, where $A_j$ are $X^k$-atoms and 
$\sum_j \mod{c_j} < \infty$, endowed with the norm
$$
\norm{F}{X^k}
= \inf\, \bigl\{\sum_{j} \mod{c_j}:  F = \sum_{j} c_j \, A_j,\quad
\hbox{where $A_j$ is an $X^k$-atom}\bigr\}. 
$$
\end{definition} 

\subsection{Estimate of operators}
We shall encounter various occurrences of the problem of estimating the $\hu{\BX}$ norm 
of functions of the form $a\ast\ga$, where $a$ is an $\hu{\BX}$-atom with support in
$B_R(o)$ for some $R\leq 1$, and $\ga$ is a $K$--bi-invariant function with support contained
in the ball $\OV{B_{\be}(o)}$.
The following lemma contains a version of such an estimate that we shall use
frequently in the sequel.  For the proof, see \cite{}.  DECIDERE A COSA RIFERIRSI

\begin{lemma} \label{l: est on annuli}
Suppose that $a$ and $\ga$ are as above.  The following hold:
\begin{enumerate}
\item[\itemno1]
there exists a constant $C$ such that 
$$
\norm{a\ast \ga}{H^1}
\leq 
\begin{cases}
\bigmod{B_{R+\be}(o)}^{1/2} \, 
      \min\big(\bignorm{\ga}{2}, C\, R\, \bignorm{\nabla\ga}{2} \big)
                                   & \hbox{if $R+\be \leq 1$}  \\ 
C\, (R+\be) \, \bigmod{B_{R+\be}(o)}^{1/2} \, \bignorm{\ga}{2} 
                                   & \hbox{if $R+\be > 1$};
\end{cases}
$$
\item[\itemno2]
suppose further that $\ga$ is of the form $\cA^{-1}(\Phi\cA\kappa)$, where $\Phi$
is a smooth function with compact support, and define
$s:= (n-\ell)/2$.  Then there exists a constant~$C$ such that 
$$
\bignorm{\cA^{-1}(\Phi \cA\kappa)}{2}
\leq C \bignorm{\Phi\cA\kappa}{H^s(\fra)}    
$$
and
$$
\bignorm{\nabla\big[\cA^{-1}(\Phi \cA\kappa)\big]}{2}
\leq C \bignorm{\Phi\cA\kappa}{H^{s+1}(\fra)},    
$$
where $H^s(\fra)$ denotes the standard Sobolev space of order $s$ on $\fra$.  
\end{enumerate}
\end{lemma}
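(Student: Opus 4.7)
The plan is to handle the two parts separately, using atoms and the almost-cancellation of~$a$ for \rmi, and spherical Plancherel combined with the estimate \eqref{f: estimatec} for \rmii.

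For part \rmi, I would first observe that $a\ast\ga$ is supported in $B_{R+\be}(o)$ and has vanishing integral (the latter by Fubini, invariance of $\wrt\mu$, and $\int a\wrt\mu=0$). In the regime $R+\be\le 1$, this means $a\ast\ga$ is a multiple of an $\hu{\BX}$-atom, so
\[
\bignorm{a\ast\ga}{\hu{\BX}}\le\bigmod{B_{R+\be}(o)}^{1/2}\,\bignorm{a\ast\ga}{2}.
\]
I would then bound $\bignorm{a\ast\ga}{2}$ in two complementary ways. First, Young's inequality together with $\bignorm{a}{1}\le\bigmod{B_R(o)}^{1/2}\bignorm{a}{2}\le 1$ yields $\bignorm{a\ast\ga}{2}\le\bignorm{\ga}{2}$. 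Second, inserting the cancellation to write $(a\ast\ga)(x)=\int a(y)\bigl[\ga(y^{-1}x)-\ga(x)\bigr]\wrt y$, applying Minkowski's integral inequality, and invoking the modulus-of-continuity estimate
\[
\bignorm{\ga(y^{-1}\cdot)-\ga}{2}\le C\,d(y\cdot o,o)\,\bignorm{\nabla\ga}{2}
\]
for $y$ in the support of $a$, produces the factor $CR$. In the regime $R+\be>1$, I would instead apply Remark~\ref{rem: economical dec} to $a\ast\ga$ with parameter $R'=R+\be$, and again bound $\bignorm{a\ast\ga}{2}\le\bignorm{\ga}{2}$ by Young's inequality.

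For part \rmii, since $\ga$ is $K$-bi-invariant and $\cH=\cF\cA$, one has $\wt\ga=\cF(\Phi\cA\kappa)$. The spherical Plancherel formula gives
\[
\bignorm{\ga}{2}^2=\int_{\fra^*}\bigmod{\cF(\Phi\cA\kappa)(\la)}^2\wrt\nu(\la),
\]
and inserting the pointwise bound \eqref{f: estimatec} for $\mod{\bc(\la)}^{-2}$ identifies the right-hand side with (a constant times) $\bignorm{\Phi\cA\kappa}{H^s(\fra)}^2$, where $s=(n-\ell)/2$. For the gradient estimate I would exploit the identity $\bignorm{\nabla\ga}{2}^2=\prodo{-\cL\ga}{\ga}$, together with the fact that the spherical transform diagonalises $-\cL$ with eigenvalue $\mod\la^2+\mod\rho^2$; repeating the previous Plancherel argument with this extra factor absorbs one additional derivative and yields the $H^{s+1}$ bound.

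The main obstacle is the translation modulus-of-continuity estimate in \rmi: establishing $\bignorm{\ga(y^{-1}\cdot)-\ga}{2}\le C\,d(y\cdot o,o)\,\bignorm{\nabla\ga}{2}$ globally on $\BX$ requires integrating $\nabla\ga$ along minimising geodesics from $x$ to $y^{-1}x$ and applying Fubini together with the $G$-invariance of $\wrt\mu$; one must be careful that the left translate of a $K$-bi-invariant function is only $K$-right-invariant, so the argument has to be phrased at the level of $\BX$ rather than on $\KGK$.
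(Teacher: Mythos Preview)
Your proposal is correct and follows the paper's proof almost exactly: part~\rmii\ via spherical Plancherel, the identity $\bignorm{\nabla\ga}{2}^2=\prodo{\cL\ga}{\ga}$, and the density estimate~\eqref{f: estimatec} is precisely what the paper does, and your outline of part~\rmi\ (atom normalisation when $R+\be\le1$, Remark~\ref{rem: economical dec} when $R+\be>1$, Young for $\bignorm{\ga}{2}$, cancellation plus Minkowski for $CR\bignorm{\nabla\ga}{2}$) matches the paper step for step.

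The one point worth flagging is your handling of the modulus-of-continuity estimate.  You propose to integrate $\nabla\ga$ along the minimising geodesic from $x\cdot o$ to $y^{-1}x\cdot o$ and then invoke $G$-invariance of $\mu$ in the Fubini step.  But for fixed $t\in[0,1]$ the map sending $x\cdot o$ to the time-$t$ point on that geodesic is \emph{not} a left translation, so $G$-invariance does not apply directly.  The paper sidesteps this by using instead the curve $t\mapsto \exp(tY)\,x\cdot o$ (where $y^{-1}=\exp Y$, $Y\in\frp$), which \emph{is} a left translation in $x$ for each fixed $t$ and hence makes the change of variables trivial.  The price is that this curve has speed $\bigmod{\Ad(x^{-1})Y}$ rather than $\mod{Y}$; the paper absorbs this into the constant $C$ using $\sup_{x\in\OV B_1(o)}\bigmod{\Ad(x^{-1})}<\infty$, which is exactly where the restriction $R+\be\le1$ enters.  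With this adjustment your argument goes through verbatim.
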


\section{Riesz transforms}\label{s: Riesz}

Our analysis of Riesz transforms may be reduced to that of certain 
operators, called scalar Riesz transforms, which are 
convolution operators whose kernels are smooth functions on $\BX\setminus\set{o}$. 
To describe these kernels we need more notation. 

For $y$ in $G$ we denote by $L(y)$ left translation by~$y$ acting on $G$, by $\wrt L(y)$ and by 
$L(y)^*$ the differential and the pull-back of 
$L(y)$ acting on tangent vectors and covariant tensors on $G$, respectively. 
With a slight abuse of notation we shall also denote by 
$L(y)$, $\wrt L(y)$ and $L(y)^*$ the corresponding maps, acting on 
$\BX$, on tangent vectors and on covariant tensors on~$\BX$.  
Thus $L(y)^*$ is an isometry between covariant tensors at the point 
$y\cdot o$ to covariant tensors at the point~$o$. 
We recall that the tangent space of~$\BX$ at the point~$o$ is 
identified with $\frp$ and the space of covariant tensors of order 
$d$ at the point $o$ is identified with~$(\frp^*)^{\otimes d}$. 

For every $\cZ=(Z_1,\ldots,Z_d)$ in $\frp^d$ 
the \emph{scalar shifted Riesz transform} $\scalarshRiesz$ of order $d$ is the operator defined by
\begin{equation} \label{f: scalar Riesz}
\scalarshRiesz f(x\cdot o)
= L(x)^*\big[\shRiesz f(x\cdot o)\big] (Z_1,\ldots,Z_d)
\quant x\in G.
\end{equation} 
It is well known that $\Riesz$ is bounded from $\ld{\BX}$
to $\ld{\BX;T^d}$ \cite{Str,Au}.  A straightforward argument shows that the same is
true of $\shRiesz$, and, consequently, $\scalarshRiesz$ is bounded on $\ld{\BX}$
for every $\cZ$ in $\frp^d$. 
Our endpoint result for Riesz transforms is the following. 

\begin{theorem}\label{t: RT}
Suppose that $d$ is a positive integer and $c>0$. The following hold:
\begin{enumerate}
\item[\itemno1] 
for every $\cZ$ in $\frp^d$ the operator $\scalarshRiesz$ extends to a bounded 
operator on $\hu{\BX}$;
\item[\itemno2] 
$\shRiesz$ extends to bounded operator from $\hu{\BX}$ to $\lu{\BX;T^d}$;
\item[\itemno3] 
$\Riesz$ extends to a bounded operator from 
$X^{\floor{(d+1)/2}}(\BX)$ to $\lu{\BX;T^d}$.
\end{enumerate}
\end{theorem}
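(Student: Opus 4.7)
The plan is to verify each assertion by reducing to a uniform estimate on atoms, exploiting the atomic definitions of $\hu{\BX}$ and $\Xh{\BX}$. First I would dispatch part \rmii\ as a consequence of part \rmi: choosing an orthonormal basis $\set{Z_{i_1}\otimes\cdots\otimes Z_{i_d}}$ of $(\frp^*)^{\otimes d}$, the pointwise tensor norm satisfies $\mod{\shRiesz f(x\cdot o)}_{T^d} \leq C\,\sum_{\cZ}\bigmod{\scalarshRiesz f(x\cdot o)}$, since $L(x)^*$ is an isometry between tensor spaces. Combined with the continuous inclusion $\hu{\BX}\subset\lu{\BX}$ this reduces \rmii\ to \rmi. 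So the work concentrates on proving \rmi\ for an arbitrary $H^1$-atom $a$ with support in a ball $B_R(o)$, $R\leq 1$, and proving \rmiii\ for an arbitrary $X^k$-atom with $k=\floor{(d+1)/2}$.

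Write the operator as right convolution with a $K$-bi-invariant distribution $\kappa$ (smooth off the origin). Fix a $K$-bi-invariant cutoff $\chi\in C_c^\infty(G)$ with $\chi\equiv 1$ on $B_1$ and $\supp\chi\subset B_2$, and split $\kappa = \kappa_0 + \kappa_\infty$ with $\kappa_0 := \chi\kappa$. The local piece $\kappa_0$ is, modulo smooth corrections, a Calder\'on--Zygmund kernel of order $d$, and the two kernels (shifted and nonshifted) differ only by a smooth function near $o$. For an atom supported in $B_R(o)$, $R\leq 1$, the convolution $a*\kappa_0$ is supported in a ball of radius $O(1)$ with vanishing integral, so classical CZ-type estimates together with Lemma~\ref{l: est on annuli}\rmi\ and Remark~\ref{rem: economical dec} furnish $\norm{a*\kappa_0}{H^1}\leq C$, uniformly in $a$.

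For the global part of $\scalarshRiesz$ the shift $c>0$ is decisive. Via the Abel transform factorisation $\cH=\cF\cA$ and Lemma~\ref{l: est on annuli}\rmii, one controls $\kappa_\infty$ and its derivatives in Sobolev norm, then the spectral estimate (\ref{f: estimatec}) and the Plancherel formula give pointwise Cartan estimates showing that $\kappa_\infty$ decays like $(1+\mod{H})^{-N}\,\e^{-2\rho(H)}$ for every $N$. Decomposing along the annular tubes from Lemma~\ref{l: U1Ur}, $\kappa_\infty = \sum_{j\geq 0}\kappa_\infty\,\One_{B'_{j+1}\setminus B'_j}$, each summand $a*(\kappa_\infty\,\One_{B'_{j+1}\setminus B'_j})$ is supported in a controlled neighbourhood of $B'_{j+1}\setminus B'_j$, has vanishing integral, and by Remark~\ref{rem: economical dec} is a bounded multiple of an $H^1$-atom up to a factor controlled by $\mod{B'_{j+1}}^{1/2}\lesssim j^{(\ell-1)/2}\e^{\mod{\rho}(j+1)}$. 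The exponential gain in $\kappa_\infty$ beats this and the series converges absolutely in $\hu{\BX}$, proving~\rmi.

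The heart of the matter, and the main obstacle, is \rmiii: the global part of $\kappa$ for $\Riesz$ is \emph{not} integrable at infinity, and its decay worsens as $d$ grows, so the single cancellation of an $\hu{\BX}$-atom is insufficient. The vanishing of $\int A\,Q\wrt\mu$ for every $k$-quasi-harmonic $Q$ must be used quantitatively. I would write
$$
A*\kappa_\infty(x) = \int A(y\cdot o)\,\bigl[\kappa_\infty(y^{-1}x)-Q_x(y\cdot o)\bigr]\wrt\mu(y\cdot o),
$$
for a carefully constructed $y\mapsto Q_x(y\cdot o)$ that is $k$-quasi-harmonic and captures the Taylor approximation of $y\mapsto\kappa_\infty(y^{-1}x)$ at the centre of the atom's support up to order $2k-1$; the remainder is then controlled by derivatives of $\kappa_\infty$ of total order $2k\geq d+1$. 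The improved decay of these high-order derivatives, obtained from the spectral representation of $\Riesz$ and the Harish-Chandra $\bc$-function estimate (\ref{f: estimatec}), is then exactly enough, when summed against $|B'_{j+1}|\lesssim j^{\ell-1}\e^{2\mod{\rho}j}$ over the annular decomposition of Lemma~\ref{l: U1Ur}, to yield a uniform bound on $\norm{\Riesz A}{\lu{\BX;T^d}}$. Producing the $k$-quasi-harmonic correction with the correct quantitative size, and matching this against sharp Cartan-coordinate estimates for $\nabla^{2k}\kappa_\infty$ via the Abel transform, is the delicate technical point.
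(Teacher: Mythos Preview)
Your treatment of \rmii\ is fine and matches the paper's reduction. There are, however, genuine gaps in your plans for \rmi\ and \rmiii.

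\textbf{Part \rmi.} The convolution kernel of $\scalarshRiesz$ is $Z_1\cdots Z_d\kappa_c^d$, obtained by right differentiation of the $K$--bi-invariant function $\kappa_c^d$; it is \emph{not} $K$--bi-invariant (right translation by $k\in K$ replaces $Z_j$ by $\Ad(k)Z_j$). Consequently the Abel transform machinery you invoke, including Lemma~\ref{l: est on annuli}\rmii, is not available for this kernel. The claimed decay $(1+\mod{H})^{-N}\e^{-2\rho(H)}$ is also incorrect: the Anker--Ji estimates give, for the shifted kernel,
\[
\bigmod{Z_1\cdots Z_d\kappa_c^d(\exp H\cdot o)}
\leq C\,(1+\mod{H})^{(d-\ell-1)/2}\,\e^{-\rho(H)-\mod{H}\sqrt{c+\mod{\rho}^2}},
\]
with the polynomial power fixed, not arbitrary. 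The paper works directly with these pointwise bounds: it decomposes $\kappa$ via smooth cutoffs on $\BX$ (not on $\fra$), controls the local piece by the $Cv_2$--module property, and for the far pieces uses the above estimate to bound $\norm{\psi_j\kappa}{2}$ and then Lemma~\ref{l: est on annuli}\rmi\ together with $\mod{B_{j+3}(o)}^{1/2}\asymp j^{(\ell-1)/2}\e^{\mod{\rho}j}$. Convergence of $\sum_j$ comes from $\sqrt{c+\mod{\rho}^2}>\mod{\rho}$, which is exactly where the shift enters.

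\textbf{Part \rmiii.} Your proposed mechanism --- subtract a $k$--quasi-harmonic Taylor corrector and gain from high-order derivatives of $\kappa_\infty$ --- does not work as stated. First, on $\BX$ there is no reason that a degree-$(2k-1)$ Taylor approximant of $y\mapsto\kappa_\infty(y^{-1}x)$ is $k$--quasi-harmonic, so the cancellation condition on $A$ cannot be invoked in the way you describe. Second, and more seriously, higher-order covariant derivatives of the \emph{nonshifted} Riesz kernel carry the \emph{same} exponential factor $\vp_0(\cdot)\,\e^{-\mod{\rho}\mod{\cdot}}$ as the kernel itself (only the polynomial prefactor changes), so no amount of differentiation produces the extra decay you need to sum against $\mod{B'_{j+1}}\asymp j^{\ell-1}\e^{2\mod{\rho}j}$. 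The paper's argument is entirely different: for $d$ odd it factors
\[
\Riesz A=\nabla^d\cL^{1/2}\bigl(\cL^{-(d+1)/2}A\bigr),
\]
and uses two facts you have not touched: Lemma~\ref{l: normL^{-k}}, which says $\cL^{-(d+1)/2}A$ is \emph{supported in $\OV B$} with $\norm{\cL^{-(d+1)/2}A}{2}\leq C\,r_B^{d+1}\mod{B}^{-1/2}$; and Lemma~\ref{l: B4Bc}, which shows that the kernel of $\nabla^d\cL^{1/2}$ (namely $Z_1\cdots Z_d\kappa_0^{-1}$, note the exponent $-1$) \emph{is} integrable at infinity, giving $\norm{\nabla^d\cL^{1/2}f}{L^1((4B)^c;T^d)}\leq C\,r_B^{-d-1}\norm{f}{L^1(B)}$. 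The compact support of $\cL^{-k}A$ is the substitute for your high-order cancellation, and the passage from $\cL^{-d/2}$ to $\cL^{1/2}$ is what restores integrability of the far kernel.
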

\par\noindent
For every $z\in \BC$ and $c\geq 0$,
the Bessel--Riesz potential $(\cL+c)^{-z/2}$ maps the space of test functions 
$\cD(\BX)$ into the space of distributions $\cD'(\BX)$. 
If  $z\not=0,-2,-4,\ldots$, then its convolution kernel $\kappa_c^z$ is a distribution, which, 
away from the origin $o$, coincides with the function
\begin{equation} \label{f: kappacz}
\kappa_c^z(x\cdot o)
= \frac{1}{\Ga(z/2)}\int_{0}^{\infty}t^{z/2-1}\, \e^{-ct} \,h_{t}(x\cdot o) \,\wrt t,
\end{equation}
where $h_t$ denotes the heat kernel on $\BX$. 
In the sequel we shall use repeatedly the estimates 
for $\kappa_c^z$ and their derivatives obtained by 
Anker and Ji \cite[Thm 4.2.2]{AJ}.
They actually considered the case where $z\ge 0$, 
but their arguments extend almost \emph{verbatim} to all complex $z\not=0,-2,-4,\ldots$; 
in particular, their estimates apply to $\kappa_0^{2iu}$, the kernel of $\cL^{-iu}$, 
for $u$ real.

For each $c\geq 0$ and every positive integer $d$, we set   
\begin{equation}\label{kernel Riesz}
\br_{c}^{d}(x\cdot o)
= L(x)^*\big[\nabla^d \kappa_{c}^{d}\big](x\cdot o) \quant x\in G \setminus K;
\end{equation}
$\br_c^d$ is a $(\frp^*)^{\otimes d}$-valued smooth function on $\BX\setminus\set{o}$.
We recall that covariant differentiation on $\BX$ 
has a simple expression in terms of left invariant derivatives on~$G$~\cite[p. 264]{A1}.  Thus, 
\begin{equation}\label{f: covdiff}
\br_c^d (x\cdot o) (Z_1,\ldots,Z_d)
= Z_1\cdots Z_d\kappa_c^d(x) \quant x\in G \quant Z_1,\ldots,Z_d\in\frp.
\end{equation} 

\begin{lemma} \label{l: kernel Riesz} 
Suppose that $c\geq 0$ and that $d$ is a nonnegative integer.  Then 
\begin{equation} \label{f: conv Riesz}
L(x)^*\, \big[\shRiesz f\big](x\cdot o)
= f\ast \br_c^d (x\cdot o) 
\end{equation}
whenever $x\cdot o$ does not belong to $\supp f$.  
Furthermore, for every $\cZ$ in $\frp^d$
\begin{equation} \label{f: conv Riesz II}
\scalarshRiesz f(x\cdot o)
= f\ast (Z_1\cdots Z_d \kappa_c^d) (x\cdot o).
\end{equation}
\end{lemma}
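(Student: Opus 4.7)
The plan is to combine three ingredients: (a) the fact that $(\cL+c)^{-d/2}$ is a $G$-invariant convolution operator on $\BX$ with kernel $\kappa_c^d$, so that $(\cL+c)^{-d/2}f = f\ast\kappa_c^d$ for every test function $f$; (b) the formula recorded in \eqref{f: covdiff}, which expresses the $d$-th covariant derivative of a right-$K$-invariant scalar function on $G$ as an iterated left-invariant derivative; and (c) the standard commutation of left-invariant differentiation with left convolution.

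First I would write, for $f\in\cD(\BX)$ and $x\cdot o \notin \supp f$,
$$(\cL+c)^{-d/2}f(x\cdot o) = (f\ast\kappa_c^d)(x) = \int_G f(y)\,\kappa_c^d(y^{-1}x)\,\wrt y.$$
The support hypothesis forces $y^{-1}x \notin K$ throughout the domain of integration, so $\kappa_c^d$ coincides there with the smooth function given by \eqref{f: kappacz} and the integral is classical. Applying $L(x)^*\nabla^d$ to both sides and invoking the scalar form of \eqref{f: covdiff}, one obtains that the tensor $L(x)^*[\shRiesz f](x\cdot o)$ evaluated at $(Z_1,\ldots,Z_d)\in\frp^d$ equals $Z_1\cdots Z_d(f\ast\kappa_c^d)(x)$, where the $Z_i$ act as left-invariant vector fields on $G$, i.e.\ by right differentiation at $x$.

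The key step is then to pass the derivatives through the convolution integral. Since $(Zg)(x) = \frac{d}{dt}g(x\exp tZ)\big|_{t=0}$ involves only the right translate of $x$, which appears in the integrand solely through the factor $\kappa_c^d(y^{-1}x)$, each $Z_i$ acts on the kernel and does not touch $f(y)$; the smoothness of $\kappa_c^d$ on $\BX\setminus\{o\}$, together with dominated convergence based on the Anker--Ji derivative bounds recalled above, legitimises differentiating under the integral sign. This yields
$$Z_1\cdots Z_d(f\ast\kappa_c^d)(x) = \int_G f(y)\,(Z_1\cdots Z_d\kappa_c^d)(y^{-1}x)\,\wrt y,$$
and by \eqref{f: covdiff} the inner factor is exactly $\br_c^d(y^{-1}x\cdot o)(Z_1,\ldots,Z_d)$. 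Since this identity holds for every $(Z_1,\ldots,Z_d)\in\frp^d$, the tensor equation \eqref{f: conv Riesz} follows; formula \eqref{f: conv Riesz II} is then immediate upon combining \eqref{f: conv Riesz} with the definition \eqref{f: scalar Riesz} of $\scalarshRiesz$.

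The main (and essentially only) point to watch is justifying the interchange of the left-invariant derivatives with a convolution against a kernel that is globally only a distribution. The hypothesis $x\cdot o\notin\supp f$ is exactly what localises the integration to a region on which $\kappa_c^d$ is smooth and reduces the manipulation to a classical differentiation under the integral sign; no further analytic subtlety is required.
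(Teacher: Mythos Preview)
Your proposal is correct and follows essentially the same route as the paper: write $(\cL+c)^{-d/2}f$ as $f\ast\kappa_c^d$, differentiate under the integral (justified by $x\cdot o\notin\supp f$ so the kernel is smooth on the domain of integration), and identify the result via \eqref{f: covdiff}. The only cosmetic difference is that the paper keeps the computation at the tensor level, using $\nabla_x^d[\kappa_c^d(y^{-1}x\cdot o)] = L(y^{-1})^*[\nabla^d\kappa_c^d](y^{-1}x\cdot o)$ and then factoring $L(y^{-1})^* = L(x^{-1})^*L(y^{-1}x)^*$, whereas you pass to scalar form immediately by applying \eqref{f: covdiff} to $f\ast\kappa_c^d$ itself; the two packagings are equivalent.
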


\begin{proof}
Suppose that $f$ is in $C^\infty_c(\BX)$ and that $x\cdot o$ does not belong to $\supp f$.
Then
\begin{align*}
\shRiesz f(x\cdot o)
& = \nabla_x^d \int_G  f(y\cdot o)\,\, \kappa_c^d(y^{-1}x\cdot o) \wrt y \\ 
& = \int_G  f(y\cdot o)\,\, \nabla_x^d\big[ \kappa_c^d(y^{-1}x\cdot o)\big] \wrt y.
\end{align*}
Since the map $L(y)$ is an isometry of $\BX$,  
\begin{align*}
\nabla_x^d \big[\kappa_c^d(y^{-1}x\cdot o)\big]
& = \nabla^d \big[L(y^{-1})\kappa_c^d\big](x\cdot o)  \\
& = L(y^{-1})^*\,\big[\nabla^d \kappa_c^d\big](y^{-1}x\cdot o).  
\end{align*}
Thus,
\begin{align*}
\shRiesz f(x\cdot o)
& = \int_Gf(y\cdot o)\,L(y^{-1})^*\,\big[\nabla_x^d \kappa_c^d\big](y^{-1}x\cdot o) \wrt y\\
& = L(x^{-1})^* \int_G f(y\cdot o)\,  \br_c^d (y^{-1} x\cdot o) \wrt y;
\end{align*}
we have used the fact that $L(y^{-1})^* = L(x^{-1})^*L(y^{-1}x)^*$ in the last inequality 
above. The proof of \eqref{f: conv Riesz} is complete.

Formula \eqref{f: conv Riesz II} follows from \eqref{f: conv Riesz}, 
the definition of scalar Riesz transform \eqref{f: scalar Riesz}, and~\eqref{f: covdiff}.
\end{proof} 

\noindent
For technical reasons, which will be apparent shortly, we shall need to consider 
another tensor valued operator related to the Riesz transforms, namely $\nabla^d \cL^{1/2}$.  
The following lemma will be useful in the proof of Theorem \ref{t: RT}~\rmiii,
as will be Lemma~\ref{l: normL^{-k}} below.  

\begin{lemma}\label{l: B4Bc}
For every positive integer $d$ there exists a constant $C$ such that 
$$
\bignorm{\nabla^d \cL^{1/2} f}{L^1((4B)^c;T^d)}
\leq  C\, r_B^{-d-1}\, \bignorm{f}{\lu{B}} \quant f\in C^\infty_c(B) 
$$
for all balls $B$ of radius $r_B\le 1$.
\end{lemma}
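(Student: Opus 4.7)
The plan is to realise $\nabla^d\cL^{1/2}f$ on $(4B)^c$ as convolution against a smooth tensor-valued kernel and then to invoke the Anker--Ji pointwise estimates. Writing $\cL^{1/2}=\cL\cdot\cL^{-1/2}$, the operator $\cL^{-1/2}$ is convolution with $\kappa_0^1$ by \eqref{f: kappacz}, so the distributional kernel of $\cL^{1/2}$ is $\cL\kappa_0^1$, which coincides with a smooth function on $\BX\setminus\{o\}$. Since $f\in C_c^\infty(B)$ and $r_B\le1$, the sets $\supp f\subseteq B$ and $(4B)^c$ are separated by at least $3r_B>0$, so the commutation argument used in Lemma~\ref{l: kernel Riesz} gives
\begin{equation*}
L(x)^*\bigl[\nabla^d\cL^{1/2}f\bigr](x\cdot o)=\int_G f(y\cdot o)\,\br(y^{-1}x\cdot o)\wrt y \quant x\cdot o\in(4B)^c,
\end{equation*}
where $\br(z\cdot o):=L(z)^*\bigl[\nabla^d\cL\kappa_0^1\bigr](z\cdot o)$ is a smooth $(\frp^*)^{\otimes d}$-valued function on $\BX\setminus\{o\}$, whose pointwise norm $|\br|$ is $K$-bi-invariant by the $K$-bi-invariance of $\kappa_0^1$.

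Next I would apply Minkowski's integral inequality together with the left invariance of $\mu$ to deduce
\begin{equation*}
\bignorm{\nabla^d\cL^{1/2}f}{L^1((4B)^c;T^d)}\le\bignorm{f}{L^1(B)}\cdot\int_{d(z\cdot o,o)\ge3r_B}\mod{\br(z\cdot o)}\wrt\mu(z\cdot o),
\end{equation*}
reducing matters to bounding the integral on the right by $C\,r_B^{-d-1}$. For this I would invoke the Anker--Ji pointwise estimates \cite[Thm~4.2.2]{AJ} applied to $\kappa_0^1$ with $d+2$ invariant derivatives (in the extended range of parameters remarked after \eqref{f: kappacz}). These yield $\mod{\br(z\cdot o)}\le C\,d(z\cdot o,o)^{-n-d-1}$ whenever $d(z\cdot o,o)\le1$ and the integrability of $\mod{\br}$ on $\{z:d(z\cdot o,o)\ge1\}$ (the latter using the spectral gap of $\cL$). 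Using polar coordinates centred at $o$ (with volume density $\asymp r^{n-1}\wrt r$ for $r\le1$) together with $r_B\le1$,
\begin{equation*}
\int_{d(z\cdot o,o)\ge3r_B}\mod{\br(z\cdot o)}\wrt\mu(z\cdot o)\le C\int_{3r_B}^{1}r^{-d-2}\wrt r+C\le C\,r_B^{-d-1},
\end{equation*}
completing the argument.

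The main obstacle will be extracting the correct local singularity $d(z\cdot o,o)^{-n-d-1}$ near the origin and, more delicately, the finiteness of $\int_{d(z\cdot o,o)\ge1}\mod{\br}\wrt\mu$ in the face of the exponential volume growth of the density $\de$ on $\fra^+$; both follow from the heat-kernel analysis of Anker and Ji applied to $\kappa_0^1$ with $d+2$ derivatives, the key input being the sharp decay at infinity ensured by the spectral gap of~$\cL$.
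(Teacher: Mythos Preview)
Your approach is essentially the paper's. Two remarks.

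First, a simplification: rather than writing the kernel of $\cL^{1/2}$ as $\cL\kappa_0^1$ and appealing to Anker--Ji with $d+2$ derivatives of $\kappa_0^1$, the paper uses $\kappa_0^{-1}$ directly (formula \eqref{f: kappacz} with $z=-1$, which is allowed since $-1\neq 0,-2,-4,\ldots$) and takes only $d$ left-invariant derivatives $Z_1\cdots Z_d\kappa_0^{-1}$. The two kernels coincide on $\BX\setminus\{o\}$, so this is cosmetic, but it matches the Anker--Ji statements more cleanly.

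Second, and more substantively, your treatment of the region $\{d(z\cdot o,o)\ge1\}$ is too quick. Saying that integrability there ``follows from the spectral gap'' is misleading: the spectral gap contributes the factor $\e^{-|\rho|\,|H|}$ in the Anker--Ji bound, but after multiplying by $\vp_0\asymp |H|^{|\Sigma_0^+|}\e^{-\rho(H)}$ and the density $\de(H)\le C\,\e^{2\rho(H)}$, the exponentials combine to $\e^{\rho(H)-|\rho|\,|H|}$, which equals $1$ along the $\rho$-direction. Convergence is therefore \emph{not} exponential and rests entirely on the polynomial prefactor. The paper makes this explicit: from \cite[Thm~4.2.2]{AJ} one gets $|Z_1\cdots Z_d\kappa_0^{-1}(\exp H\cdot o)|\le C\,(1+|H|)^{-|\Sigma_0^+|-1-\ell/2}\vp_0(\exp H)\,\e^{-|\rho|\,|H|}$, and then shows by direct computation in Cartan coordinates that
\[
\int_{\frb_3^c} |H|^{-1-\ell/2}\,\e^{\rho(H)-|\rho|\,|H|}\wrt H<\infty,
\]
using orthogonal coordinates with $H_1=\rho(H)/|\rho|$. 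You should fill this in rather than defer it to ``the spectral gap''.
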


\begin{proof}
Recall that
$$
\bignorm{\nabla^d \cL^{1/2} f}{L^1((4B)^c;T^d)}
= \int_{(4B)^c} \bigmod{\nabla^d \cL^{1/2} f(x\cdot o)}_{x\cdot o} \wrt \mu(x\cdot o),
$$
where $\mod{\nabla^d \cL^{1/2} f(x\cdot o)}_{x\cdot o}$ denotes the norm
of the covariant tensor $\nabla^d \cL^{1/2} f(x\cdot o)$.  
Since $L(x)^*$ is an isometry between covariant tensors at $x\cdot o$ and covariant
tensors~at~$o$, 
$$
\begin{aligned}
\bigmod{\nabla^d \cL^{1/2} f(x\cdot o)}_{x\cdot o}
& = \bigmod{L(x)^*[\nabla^d \cL^{1/2} f(x\cdot o)]}_{o} \\
& = \sup_{\mod{\cZ} \leq 1} \, \bigmod{L(x)^* \big[\nabla^d\,
\cL^{1/2} f(x\cdot o)\big](Z_1,\ldots,Z_d)}. 
\end{aligned}
$$
For every $d$-tuple of vectors $\cZ = (Z_1,\ldots,Z_d)$ in the unit ball of 
$\frp$, consider the scalar operator $\cS_{\cZ}^d$, defined by
$$
\cS_{\cZ}^d\,f(x\cdot o)
= L(x)^* \big[\nabla^d\,\cL^{1/2} f(x\cdot o)\big](Z_1,\ldots,Z_d).
$$
Thus, to prove the lemma it suffices to show that there exists a constant $C$ such that 
$$
\Bignorm{\sup_{\mod{\cZ} \leq 1} \, \mod{\cS_{\cZ}^df}}{\lu{(4B)^c}}
\leq C\, r_B^{-d-1}\, \bignorm{f}{\lu{B}}.  
$$
By arguing much as in in the proof of Lemma~\ref{l: kernel Riesz}, 
it is straightforward to check that
$$
\cS_{\cZ}^d\,f(x\cdot o)
= f\ast Z_1\cdots Z_d \kappa_0^{-1}(x\cdot o),
$$
where $\kappa_0^{-1}$ is defined in \eqref{f: kappacz}.  
For the sake of brevity, for the duration of this proof, we write 
$s_{\cZ}^d$ instead of $Z_1\cdots Z_d \kappa_0^{-1}$.
Thus, it suffices to show that
$$
\int_{(3B)^c} \sup_{\mod{\cZ}\leq 1} \bigmod{s_\cZ^d(x\cdot o)}  \wrt \mu(x\cdot o)
\leq  C\, r_B^{-d-1}.
$$
We write the integral as the sum of the integrals over the annulus $B_3(o) \setminus 3B$ 
and over $B_3(o)^c$, and estimate them separately.
\par

To estimate the first integral, we observe that, by \cite[Remark~4.2.3~\rmiii]{AJ}, there 
exists a constant $C$, independent of $Z_1,\ldots,Z_d$ in the unit ball of $\frp$, such that 
$$
\bigmod{s_\cZ^d(x\cdot o)} 
\leq C \,  \mod{x\cdot o}^{-1-n-d} 
\quant x\cdot o \in B_3(o).  
$$ 
Therefore
$$
\int_{B_3(o)\setminus 3B} \, \sup_{\mod{\cZ} \leq 1} \,\bigmod{s_\cZ^d(x\cdot o)} 
\wrt \mu(x\cdot o)
\leq  C\, r_B^{-d-1}.
$$

To estimate the second integral, we observe that, by \cite[Thm 4.2.2]{AJ}, there 
exists a constant $C$, independent of $Z_1,\ldots,Z_d$ in the unit ball of $\frp$, such that 
$$
{\bigmod{s_\cZ^d(x\cdot o)}}
\leq C\,  \big(1+\mod{x\cdot o}\big)^{-\mod{\Sigma_0^+}-1-\ell/2}\,
    \vp_0(x\cdot o)\,\, \e^{-\mod{\rho}\, \mod{x\cdot o}}.  
$$
Then we integrate in polar co-ordinates \eqref{intCartan}; 
by combining this estimate with estimates \eqref{delta} for the density $\de$
and \eqref{phi0} for $\vp_0$, we get
\begin{align*}
\int_{B_3(o)^c} \, \sup_{\mod{\cZ} \leq 1}\bigmod{s_\cZ^d(x\cdot o) }\wrt \mu(x\cdot o)
& =    C\, \int_{\frb_3^c} \, \sup_{\mod{\cZ} \leq 1}\bigmod{s_\cZ^d(\exp H\cdot o)}\, 
         \de(H) \wrt H\\
& \leq C\, \int_{\frb_3^c} \mod{H}^{-1-\ell/2}\, \e^{\rho(H)-\mod{\rho}\mod{H}} \wrt H.
\end{align*}
Denote by $(H_1,\ldots,H_\ell)$ the coordinates of $H$
 with respect to an orthonormal basis of~$\fra$ such that 
$H_1=\rho(H)/\mod{\rho}$. Then the latter integral is dominated by 
$$
\int_{\frb_3^c} \mod{H}^{-1-\ell/2}\, \e^{\mod{\rho}(H_1-\mod{H})} 
\wrt H_1\cdots \wrt H_\ell,
$$
which is easily seen to converge. This concludes the proof of the lemma.
\end{proof}

\begin{lemma}\label{l: normL^{-k}} 
Suppose that $k$ is a positive integer.
For every $X^k$-atom $A$ with support contained in $B$,
the support of $\cL^{-k}A$ is contained in $\OV B$. 
Furthermore, there exists a positive constant $C$, independent of $A$, such that 
$$
\bignorm{\cL^{-k}A}{2}
\leq C\, r_B^{2k}\, \bigmod{B}^{-1/2}.
$$
\end{lemma}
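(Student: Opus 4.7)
My plan is to exploit the representation of $X^k$-atoms established in \cite{MMVdual}: an $X^k$-atom $A$ supported in a ball $B$ of radius $r_B\leq 1$ admits a factorisation $A = \cL^k b$ with $b\in L^2(\BX)$ and $\supp b\subseteq\overline B$. The moment conditions $\int A\cdot Q\wrt\mu = 0$ against $k$-quasi-harmonic $Q$ are precisely the compatibility conditions for such a compactly supported pre-image: integrating by parts formally and using that $\cL^k Q$ is a constant $c_Q$,
$$
\int_\BX A\cdot Q\wrt\mu = \int_B (\cL^k b)\cdot Q\wrt\mu = \int_B b\cdot \cL^k Q\wrt\mu = c_Q\int_B b\wrt\mu,
$$
so the moment conditions force $\int b\wrt\mu = 0$ and, conversely, this vanishing is what permits the construction of a compactly supported $b$. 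Since $\cL$ has spectrum in $[\mod{\rho}^2,\infty)$, it is injective on $L^2(\BX)$, so $\cL^k$ is injective and the identity $\cL^k b = A$ gives $b = \cL^{-k}A$. The support statement follows immediately.

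For the $L^2$-bound, I would identify $b$ with the solution of the Dirichlet problem $\cL^k b = A$ on $B$ with vanishing boundary traces. The central estimate is a Faber--Krahn type inequality: the bottom of the Dirichlet spectrum of $\cL$ on a ball $B$ of radius $r_B\leq 1$ in $\BX$ satisfies $\lambda_1(B)\geq c\, r_B^{-2}$. This is a standard consequence of the bounded geometry of $\BX$ (balls of radius $\leq 1$ are bi-Lipschitz comparable to Euclidean balls with uniform constants) combined with the Euclidean Faber--Krahn inequality. Iterating $k$ times, the Dirichlet inverse of $\cL^k$ on $B$ has operator norm at most $C\, r_B^{2k}$ on $L^2(B)$, so combining with the atomic bound $\norm{A}{2}\leq\mod{B}^{-1/2}$ yields
$$
\bignorm{\cL^{-k}A}{2} = \norm{b}{2}\leq C\, r_B^{2k}\, \norm{A}{2}\leq C\, r_B^{2k}\, \mod{B}^{-1/2},
$$
as required.

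The hard part will be the rigorous identification of $\cL^{-k}A$ (the global $L^2$-inverse) with $b$ (the Dirichlet solution extended by zero). Since $\cL^{-k}$ is not a local operator on $\BX$ -- its convolution kernel $\kappa_0^{2k}$ is strictly positive away from the origin -- the support-preservation property is specific to $X^k$-atoms and definitely fails for arbitrary $L^2$-functions supported in $B$. What rescues the argument is the cancellation encoded in the moment conditions, which forces $A$ to lie in the image of $\cL^k$ restricted to $L^2$-functions supported in $\overline B$. Making the distributional identity $\cL^k b = A$ rigorous on all of $\BX$, and matching the boundary behaviour of the Dirichlet solution with the global $L^2$-inverse, is the technical heart of the argument and is where the characterization in \cite{MMVdual} is essential.
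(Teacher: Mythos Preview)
Your proposal is correct and follows essentially the same route as the paper's own proof. The paper likewise outsources the support-preservation property to an external reference (specifically \cite[Remark~3.5]{MMV3} rather than the factorisation in \cite{MMVdual}, but these encode the same fact), invokes the Faber--Krahn inequality $\la_1(B)\geq C\,r_B^{-2}$ for the Dirichlet eigenvalue bound, deduces the case $k=1$ from \cite[Corollary~3.3]{MMV5}, and then obtains general $k$ by induction---which is exactly your iteration of the Dirichlet inverse.
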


\begin{proof}
The support of $\cL^{-k}A$ is contained in $\OV B$ by \cite[Remark 3.5]{MMV3}. 
Denote by $\la_1(B)$ the smallest eigenvalue of the Dirichlet Laplacian on $B$. 
By Faber-Krahn's inequality~\cite{Gr1} there exists a positive constant $C$, independent of
the ball $B$, such that $\la_1(B)\ge C r_B^{-2}$. 
Hence the desired conclusion for $k=1$ follows from \cite[Corollary 3.3]{MMV5}. 
The general case follows from this by a straightforward induction argument. 
\end{proof}

\noindent 
We are now in position to prove Theorem \ref{t: RT}. 

\begin{proof} 
For the sake of simplicity, for the duration of this proof we shall denote the
kernel $Z_1\cdots Z_d \kappa_c^d$ of $\scalarshRiesz$ (see Lemma~\ref{l: kernel Riesz} above) 
simply by $\kappa$, and set
$$
A_j
:= \{x\cdot o\in \BX: j \leq \mod{x\cdot o} < j+2 \}.
$$
In view of \cite[Theorem 4.1]{MM} and the translation invariance of $\scalarshRiesz$,
to prove~\rmi\ it suffices to show that 
$$
\sup_{a} \, \bignorm{\scalarshRiesz \,a}{H^1}
< \infty, 
$$
where the supremum is taken over all $H^1$-atoms $a$ with support contained in $B_R(o)$
for some $R\leq 1$.  We analyse the cases where $R\geq 10^{-1}$ and $R< 10^{-1}$  
separately. 

Suppose first that $R\geq 10^{-1}$.  We consider a partition of unity on $\BX$ of the form
\begin{equation} \label{f: partition of unity I}
1 = \vp + \sum_{j=1}^\infty \psi_j,
\end{equation} 
where $\vp$ and $\psi_j$ are smooth $K$--invariant functions on $\BX$, 
the support of $\vp$ is contained in $B_2(o)$, and the support of $\psi_j$
is contained in the annulus $A_j$.  Then we write
$$
\scalarshRiesz a
= a\ast(\vp\kappa) + \sum_{j=1}^\infty a\ast(\psi_j\kappa).  
$$
We denote by $\bignorm{\vp\kappa}{Cv_2}$ the norm of the convolution operator
$f\mapsto f \ast (\vp\kappa)$, acting on $\ld{\BX}$.  Observe that $\bignorm{\kappa}{Cv_2}
< \infty$, because $\scalarshRiesz$ is bounded on $\ld{\BX}$.  Since $Cv_2(\BX)$
is a $C_c^\infty(\KGK)$-module, $\bignorm{\vp\kappa}{Cv_2}< \infty$. 
Thus,
\begin{equation} \label{f: local two part}
\begin{aligned}
\bignorm{a\ast(\vp\kappa)}{H^1} 
& \leq \bigmod{B_{R+2}(o)}^{1/2} \, \bignorm{a}{2} \, \bignorm{\vp\kappa}{Cv_2} \\  
& \leq \sqrt{\frac{\bigmod{B_{R+2}(o)}}{\bigmod{B_R(o)}} }
      \, \bignorm{\vp\kappa}{Cv_2} \\ 
& \leq C \bignorm{\vp\kappa}{Cv_2};
\end{aligned}
\end{equation} 
in the last inequality we have used the assumption $R \geq 10^{-1}$
and the local doubling condition.  
Furthermore, by Lemma~\ref{l: est on annuli} (in the case where $R+\be >1$), 
$$
\begin{aligned}
\bignorm{a\ast(\psi_j\kappa)}{H^1} 
& \leq (R+j+2) \, \bigmod{B_{R+j+2}(o)}^{1/2} \, \bignorm{\psi_j\kappa}{2}. 
\end{aligned}
$$
To estimate the $L^2$ norm of $\psi_j\kappa$ 
observe that \cite[Thm 4.2.2]{AJ} and estimate \eqref{phi0} imply that there 
exists a constant $C$ such that 
$$
\mod{\kappa(x\cdot o)}
\leq C\, \big(1+|x\cdot o|\big)^{(d-\ell-1)/2}\, \, 
\e^{-\rho(A^+(x))-\mod{x\cdot o}\sqrt{c^2+|\rho|^2}}.  
$$
By integrating in Cartan co-ordinates, and using the estimate above and the
estimate \eqref{delta} of the density function $\de$, we see that 
\begin{equation}\label{kinftyj}
\begin{aligned}
\bignormto{\psi_j \kappa}{2}2
& \leq C \, \int_{A_j}  |H|^{d-\ell-1}\,  \e^{-2|H|\sqrt{c^2+|\rho|^2}}\, \wrt H\\
& \leq C \, j^{d-2}\,\e^{-2j\sqrt{c^2+|\rho|^2}}.
\end{aligned}
\end{equation}
By combining the estimates above, we obtain that 
$$
\begin{aligned}
\bignorm{\scalarshRiesz a}{H^1}
& \leq \bignorm{a\ast(\vp\kappa)}{H^1} + \sum_{j=1}^\infty \, \bignorm{a\ast(\psi_j\kappa)}{H^1} \\
& \leq C \bignorm{\vp\kappa}{Cv_2}
      + C\,\sum_{j=1}^\infty \, j \, \bigmod{B_{j+3}(o)}^{1/2}\, j^{d/2-1}\,
        \e^{-j\sqrt{c^2+|\rho|^2}}.
\end{aligned}
$$
We use the estimate $\mod{B_{j+3}(o)} \leq C\, j^{\ell-1} \,\e^{2\mod{\rho}j}$, and conclude that 
$$
\bignorm{\scalarshRiesz a}{H^1}
\leq C \bignorm{\vp\kappa}{Cv_2}
      + C\,  \sum_{j=1}^\infty  \, j^{(d+\ell-1)/2}\,\e^{j(\mod{\rho}-\sqrt{c^2+|\rho|^2})},
$$
which is easily seen to be finite (and independent of $a$).   

Next suppose that $R < 10^{-1}$.  We denote by $D_{2^hR}$ the dyadic annulus 
$$
\{x\cdot o\in \BX: 2^{h-1} R \leq \mod{x\cdot o} < 2^{h+1} R\},
$$ 
and consider a partition of unity on $\BX$ of the form
\begin{equation} \label{f: partition of unity II}
1 = \phi +  \sum_{h=1}^N \eta_h + \psi_0 + \sum_{j=1}^\infty \psi_j,
\end{equation} 
where $\psi_j$, $j=1,2,\ldots$, is as in \eqref{f: partition of unity I},  
$\phi$ is a $K$--invariant function on $\BX$ with support contained in $B_{2R}(o)$,
$\eta_h$ are smooth $K$--invariant functions on $\BX$, with support contained in $D_{2^hR}$,
$N$ is the least integer for which $2^{N+1}R > 10^{-1}$, and the support of 
$\psi_0$ is contained in the annulus 
$\{ x\cdot o \in \BX:  10^{-1} \leq \mod{x\cdot o} \leq 2 \}$.  We also require that 
there exists a constant $C$ such that 
$$
\bigmod{\nabla\eta_h(x\cdot o)}
\leq C \, (2^hR)^{-1}
\quant h \in \{1,\ldots, N\}.  
$$  
It is important to keep in mind that $\phi$ and $\eta_j$ depend on~$R$. 

By arguing \emph{verbatim} as above, we may prove that the $\hu{\BX}$ norm 
of $\sum_{j=1}^\infty a\ast (\psi_j\kappa)$ is uniformly bounded with respect to $R$
in $(0,10^{-1}]$, and that the same is true of $a\ast (\psi_0\kappa)$.  
Also, much as for the estimate of the $\hu{\BX}$ norm of $a\ast (\vp\kappa)$ above, 
\begin{equation} \label{f: local R part}
\begin{aligned}
\bignorm{a\ast(\phi\kappa)}{H^1} 
& \leq \bigmod{B_{3R}(o)}^{1/2} \, \bignorm{a}{2} \, \bignorm{\phi\kappa}{Cv_2} \\  
& \leq \sqrt{\frac{\bigmod{B_{3R}(o)}}{\bigmod{B_R(o)}} }
      \, \bignorm{\phi\kappa}{Cv_2} \\ 
& \leq C \bignorm{\kappa}{Cv_2};
\end{aligned}
\end{equation}
in the last inequality we have used the local doubling condition, and  the fact
that multiplication by $\phi$ is a bounded operator on $Cv_2(\BX)$, with norm independent
of $R$, as long as $R$ stays bounded.  

Thus, to conclude the proof of~\rmi\ it suffices to show that the $\hu{\BX}$ norm of 
$\sum_{h=1}^N a\ast (\eta_h\kappa)$ is uniformly bounded with respect to $R$
in $(0,10^{-1}]$.  
Since the support of $a\ast (\eta_h\kappa)$ is contained in $B_1(o)$,
we may apply the first estimate in Lemma~\ref{l: est on annuli}~\rmi, and conclude that  
\begin{equation} \label{f: est aastetajkappa}
\bignorm{a\ast (\eta_h\kappa)}{H^1}
\leq \bigmod{B_{R+2^{h+1}R}(o)}^{1/2} \, 
      \min\big(\bignorm{\eta_h\kappa}{2}, C\, R\, \bignorm{\nabla(\eta_h\kappa)}{2} \big).  
\end{equation} 
By \cite[Remark~4.2.3~\rmiii]{AJ}, there exists a constant $C$ such that 
$$
\bigmod{\kappa(x\cdot o)}
\leq C\,  |x|^{-n}  
\qquad 
\bigmod{\nabla \kappa(x\cdot o)}
\leq C\,  |x|^{-n-1} 
\quant x \in G: \mod{x\cdot o} \leq 1.
$$
This, and the fact that the support of $\eta_h\kappa$ is contained in $D_{2^hR}$ imply that 
$$
\begin{aligned}
\bigmod{\nabla (\eta_h\kappa)(x\cdot o)}
& \leq C\,\big[(2^hR)^{-1}\,\mod{\kappa(x)} + \bigmod{\nabla \kappa(x)}\big]  \\
& \leq C\,\big[(2^hR)^{-1}\,\mod{x\cdot o}^{-n} + \mod{x\cdot o}^{-n-1}\big]
\quant x\in D_{2^hR}.  
\end{aligned}
$$
Therefore
$$
\begin{aligned}
\bignorm{\nabla(\eta_h\kappa)}{2} 
& \leq C\, (2^hR)^{-1} \, \Big[\int_{D_{2^hR}} |x|^{-2n} \wrt x \Big]^{1/2}
         + \Big[\int_{D_{2^hR}} |x|^{-2n-2} \wrt x \Big]^{1/2}\\
& \leq C\, (2^hR)^{-n-1} \, (2^hR)^{n/2}\\
& =    C\, (2^hR)^{-n/2-1}.
\end{aligned}
$$
This estimate, \eqref{f: est aastetajkappa}, and the fact that 
$|B_{(2^{h+1}+1)R}| \leq C \, (2^hR)^n$ imply that there exists a constant $C$, independent of $a$,
such that 
$$
\bignorm{a \ast (\eta_h\kappa)}{H^1}
\leq  C\, 2^{-h}, 
$$
so that $\sup_{R\leq 10^{-1}} \bignorm{\sum_{h=1}^N a\ast (\eta_h\kappa)}{H^1} < \infty$, and
the proof of \rmi\ is complete.    

Next we prove \rmii, i.e. that $\shRiesz$ is bounded from $\hu{\BX}$ to
$\lu{\BX;T^d}$.  A careful examination of the proof of \rmi\
reveals that there exists a constant $C$, independent of $Z_1,\ldots,Z_d$
in the unit ball of $\frp$, such that 
\begin{equation} \label{f: sup Riesz}
\Bignorm{\sup_{\mod{\cZ} \leq 1} \,\bigmod{\scalarshRiesz \,a}}{L^1}
\leq C.  
\end{equation}
As in the proof of Lemma~\ref{l: B4Bc} we use the fact that
$L(x)^*$ is an isometry between covariant tensors at the point $x\cdot o$ 
and covariant tensors at $o$, and conclude that 
$$
\bigmod{\shRiesz a(x\cdot o)}_{x\cdot o}
= \sup_{\mod{\cZ} \leq 1} \, \bigmod{\scalarshRiesz a(x\cdot o)}.  
$$
The required estimate follows directly from this and \eqref{f: sup Riesz}. 

Finally, we prove \rmiii. 
If $d$ is even, then $\floor{(d+1)/2}=d/2$ and the result is already known
\cite[Theorem 5.2]{MMV3}. 
Thus, we only need to consider the case when $d$ is odd, for which $\floor{(d+1)/2}=(d+1)/2$. 
By \cite[Corollary 6.2 and Proposition 6.3]{MMVdual} and the translation invariance
of $\Riesz$, it suffices to prove that 
\begin{equation} \label{f: aim}
\sup_A \bignorm{\Riesz A}{L^1(\BX;T^d)}
< \infty,
\end{equation}
where the supremum is taken over all $X^{(d+1)/2}$-atoms $A$ 
supported in balls centred at~$o$.
Given such an atom $A$, denote by $B_R(o)$ the ball associated to it.  
Observe that
\begin{equation}\label{f: local+global}
\bignorm{\Riesz A}{L^1(\BX;T^d)} 
= \bignorm{\mod{\Riesz A}}{L^1(4B)}  + \bignorm{\mod{\Riesz A}}{L^1((4B)^c)} .
\end{equation}
We shall estimate the two summands on the right hand side separately. 
Clearly
$$
\begin{aligned}
\bignorm{\mod{\Riesz A}}{L^1(4B)}
& \leq \mod{4B}^{1/2} \, \bignorm{\mod{\Riesz A}}{L^2(4B)} \\
& \leq C\,\sqrt{\mod{4B}/\mod{B}} \\
& \leq C;
\end{aligned}
$$
here we have applied the $L^2$-boundedness of $\Riesz$, the 
size property of $A$ and the local doubling property of $\mu$.

To estimate the second summand in \eqref{f: local+global} we write
$$
\Riesz A 
= \nabla^d \cL^{1/2}(\cL^{-(d+1)/2}A).
$$
By Schwarz's inequality and Lemma \ref{l: normL^{-k}}
there exists a constant $C$, independent of $A$, such that 
$$
\begin{aligned}
\bignorm{\cL^{-(d+1)/2}A}{\lu{B}}
& \leq \mod{B}^{1/2} \, \bignorm{\cL^{-(d+1)/2}A}{\ld{B}} \\
& \leq \mod{B}^{1/2} \, C\,  R^{d+1} \, \mod{B}^{-1/2} \\
& \leq  C\,  R^{d+1}.
\end{aligned}
$$
Now, Lemma~\ref{l: B4Bc} and this estimate imply that
\begin{equation*} 
\begin{aligned}  
\bignorm{\Riesz A}{\lu{(4B)^c;T^d}} 
& =    \bignorm{\mod{\nabla^d\cL^{1/2} \, (\cL^{-(d+1)/2}A)}}{\lu{(4B)^c}} \\
& \leq C\,  R^{-d-1} \, \bignorm{\cL^{-(d+1)/2}A}{L^1(B)} \\   
& \leq C.
\end{aligned}
\end{equation*}
This concludes the proof for odd $m$. 
\end{proof}

We notice that, by interpolation, Theorem \ref{t: RT} implies the $L^p$ boundedness 
of $\cR^d_c$ and $\cR^d$ for every $c>0$ and $p\in (1,2]$ 
(see \cite{CMM, MMV} for interpolation properties of $H^1(\BX)$ and $X^k(\BX)$). 

\section{Unboundedness on $\hu{\BX}$ of Riesz potentials 
and Riesz transform}   \label{s: negative results}

In this section we prove that the Riesz potentials $\cL^{-\si/2}$, 
$\si>0$, and the Riesz transform $\cR^1$ are unbounded from $\hu{\BX}$ to $\lu{\BX}$. 
Thus the endpoint result in Theorem~\ref{t: RT}~\rmiii\ is~sharp.

\begin{theorem}\label{t: notH^1L^1}
The operators $\cL^{-\si/2}$, $\si>0$, do not map $\hu{\BX}$ to $\lu{\BX}$  
and the Riesz transform $\cR^1$ does not map $\hu{\BX}$ to $\lu{\BX; T^1}$.
\end{theorem}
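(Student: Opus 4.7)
The strategy is to exhibit a single $K$--bi-invariant $\hu{\BX}$--atom $a$ such that simultaneously $\cL^{-\si/2}a\notin\lu{\BX}$ for every $\si>0$ and $\cR^1a\notin\lu{\BX;T^1}$. Since both operators are $G$--invariant, it suffices to locate such an atom near the origin. Concretely, take $a:=c_a\,\bigl(\charfn{B_r(o)}-\la\,\charfn{B_1(o)\setminus B_r(o)}\bigr)$, with $r\in(0,1)$ and $\la=\mod{B_r(o)}/\mod{B_1(o)\setminus B_r(o)}$ so that $\int_\BX a\,\d\mu=0$, and $c_a>0$ chosen so that the $\ld{\BX}$--size condition of an atom is met. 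Set $c_0:=\cH a(0)=\int_\BX a\,\vp_0\,\d\mu$. Since $\vp_0$ attains its strict maximum $1$ at $o$ and decays away from it, the mean of $\vp_0$ on $B_r(o)$ strictly exceeds its mean on the enclosing annulus, giving $c_0>0$. In spherical-transform language: the atom condition forces $\cH a(-i\rho)=0$, but places no constraint on the interior value $\cH a(0)$.

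The next step is a pointwise lower bound on $\cL^{-\si/2}a=a\ast\kappa_0^\si$ at infinity, deep in the Weyl chamber. By \cite[Thm~4.2.2]{AJ} applied to the kernel \eqref{f: kappacz},
$$
\kappa_0^\si(\exp H\cdot o)\asymp C_\si\, P_\si(H)\,\e^{-\rho(H)}
\qquad\hbox{as $|H|\to\infty$,\ $H\in\fra^+$,\ $\om(H)\ge1$,}
$$
with $C_\si\ne0$ and $P_\si$ an explicit positive polynomial--type factor, of the same exponential type as $\vp_0$ in view of \eqref{phi0}. Combining this asymptotic with the spherical convolution identity $a\ast\vp_\la=\cH a(\la)\,\vp_\la$ at $\la=0$ and the Harish-Chandra expansion \eqref{f: HC expansion} of $\vp_\la$, one extracts the pointwise lower bound
$$
\bigmod{\cL^{-\si/2}a(\exp H\cdot o)}\gtrsim |c_0|\, C_\si\, P_\si(H)\,\e^{-\rho(H)}
$$
for $H$ in a truncated interior cone $\{H\in\fra^+:\om(H)\ge1,\ |H|\ge R_0\}$. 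Integrating against $\de(H)\asymp\e^{2\rho(H)}$ on such a cone via the estimate \eqref{delta} then yields
$$
\int_\BX\bigmod{\cL^{-\si/2}a}\,\d\mu\gtrsim\int P_\si(H)\,\e^{\rho(H)}\,\d H=\infty,
$$
which proves the unboundedness for every $\si>0$.

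For the Riesz transform $\cR^1=\nabla\cL^{-1/2}$, apply the same analysis to the same atom with $\si=1$. Since $\cL^{-1/2}a$ is $K$--bi-invariant, its covariant derivative at $\exp H\cdot o\in A\cdot o$ is tangent to $A\cdot o$, and differentiating the asymptotic of $\cL^{-1/2}a$ along $\fra^+$ produces an extra factor of order $|\rho|$ from $\e^{-\rho(H)}$, so
$$
\bigmod{\cR^1 a(\exp H\cdot o)}_{\exp H\cdot o}\gtrsim |\rho|\,|c_0|\, C_1\, P_1(H)\,\e^{-\rho(H)}
$$
on the same cone; the same volume integration yields $\cR^1 a\notin\lu{\BX;T^1}$. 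The hard step will be making the pointwise asymptotic rigorous: a priori the cancellation $\int a=0$ could suppress the leading $\vp_0$--component of $a\ast\kappa_0^\si$, and one must verify that it does not. The key point is that only $\cH a(-i\rho)=0$ is enforced, whereas the asymptotic at infinity is governed by $\cH a(0)=c_0\neq0$. Rigorously, one inserts the Harish-Chandra expansion for $\vp_\la(\exp H\cdot o)$ into the spherical Plancherel inversion of $\cL^{-\si/2}a$ and isolates the dominant term as an oscillatory integral whose amplitude at $H\to\infty$ is controlled by $\cH a(0)$, bounding the remainder through a contour deformation in the tube $T_{\OV\bW}$.
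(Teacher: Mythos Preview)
Your strategy---a single $K$--bi-invariant atom $a$ with $\cH a(0)\ne 0$---is conceptually sound, but the execution has a factual error and a real gap. First, the asymptotic you quote for $\kappa_0^\si$ is wrong: Anker--Ji \cite[Thm~4.2.2]{AJ} gives, deep in the chamber,
\[
\kappa_0^\si(\exp H\cdot o)\asymp \mod{H}^{(\si-\ell-1)/2}\,\vp_0(\exp H\cdot o)\,\e^{-\mod{\rho}\,\mod{H}},
\]
so there is an extra factor $\e^{-\mod{\rho}\,\mod{H}}$ beyond the $\e^{-\rho(H)}$ coming from $\vp_0$ (this reflects the spectral gap $\mod{\rho}^2$). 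With the correct asymptotic, the integral you write as $\int P_\si(H)\,\e^{\rho(H)}\,\d H$ becomes $\int\mod{H}^{(\si-\ell-1)/2}\,\e^{\rho(H)-\mod{\rho}\mod{H}}\,\d H$, whose divergence is borderline rather than exponential and only occurs on a narrow cone around $\rho/\mod{\rho}$; you would need the same careful cone argument the paper uses at the end of its proof.

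Second, the heart of the matter---showing $\bigmod{a\ast\kappa_0^\si(\exp H\cdot o)}\gtrsim\mod{c_0}\,\kappa_0^\si(\exp H\cdot o)$ for $H$ large in the cone---is precisely the step you have not carried out. Inserting the Harish-Chandra expansion into the Plancherel inversion of $\cH a(\la)\,Q(\la)^{-\si/2}$ and isolating the dominant contribution near $\la=0$ is essentially redoing the Anker--Ji analysis with the extra entire factor $\cH a(\la)$; plausible, but a nontrivial piece of work that you only gesture at. Likewise, ``differentiating the asymptotic'' for $\cR^1$ is not a proof: pointwise asymptotics do not survive differentiation without additional uniformity.

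The paper sidesteps all of this. Its test function is \emph{not} a small atom but $f=b\,\One_{B_{\eta}(o)}-b\,\One_{B_{\eta}(a_R^{-1}\cdot o)}$, two tiny bumps at distance $R$ apart (still in $\hu{\BX}$ by Remark~\ref{rem: economical dec}). The separation lets one write $f\ast\kappa_0^\si(x\cdot o)$ as an average of $\kappa_0^\si(x^{-1}\cdot o)-\kappa_0^\si(x^{-1}a_R^{-1}\cdot o)$, and a simple Harnack-type inequality for $\kappa_0^\si$ (Lemma~\ref{l: harnack like}, an immediate consequence of the Anker--Ji gradient bound) shows the second term is negligible once $R$ is large and $x$ lies in an appropriate cone. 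No spectral inversion or contour shift is needed---just the two-sided estimate \eqref{f: spherical functions on cones} on the kernel itself. For $\cR^1$, the paper invokes Cheeger's inequality in one line: $\bignorm{\mod{\cR^1f}}1\ge c\,\bignorm{\cL^{-1/2}f}1=\infty$. Your approach could likely be salvaged, but it is substantially harder than the paper's and, as written, not yet a proof.
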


\noindent
The proof of this theorem requires some Harnack type estimates, 
which will be established in the next lemma.
For each positive real number $R$, denote by $H_R$ the element in the positive Weyl chamber 
$\mathfrak{a}^+$ such that $\mod{H_R}=R$ and 
$$
\prodo{H_R}{H}
= R \, \frac{\rho(H)}{\mod{\rho}} 
\quant H \in \mathfrak{a}.
$$ 
Set $a_R = \exp H_R$.  Recall that $\kappa_0^\si$ is the convolution
kernel of $\cL^{-\si/2}$ (see formula \eqref{f: kappacz}).  

\begin{lemma} \label{l: harnack like}
For each $\vep >0$ the following hold: 
\begin{enumerate}
\item[\itemno1]
there exists a positive number $\eta_0$ such that 
$$
\sup_{B_\eta(y\cdot o)}\, \kappa_0^\si
\leq (1+\vep) \,  \inf_{B_\eta(y\cdot o)}\, \kappa_0^\si
\quant \eta\leq \eta_0 \quant y\cdot o\in B_2(o)^c;  
$$
\item[\itemno2]
for each $R>0$ there exists a neighbourhood $U$ of the identity in $K$ such that 
$$
\kappa_0^\si(a_Rua \cdot o) 
\leq (1+\vep)\,  \kappa_0^\si(a_Ra \cdot o) 
\quant u \in U \quant a \in \exp \frb_2^c. 
$$
\end{enumerate}
\end{lemma}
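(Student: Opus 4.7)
The plan rests on the pointwise estimates of Anker and Ji \cite[Theorem~4.2.2, Remark~4.2.3]{AJ} for $\kappa_0^\si$ and $\nabla\kappa_0^\si$. These show that $\kappa_0^\si$ is smooth and strictly positive on $\BX\setminus\{o\}$, and that for every $c>0$ there exists a constant $C_c$ with $\bigmod{\nabla\log\kappa_0^\si(x\cdot o)}\leq C_c$ for all $x\cdot o\in B_c(o)^c$; in other words, $\log\kappa_0^\si$ is Lipschitz on $B_c(o)^c$ with respect to the Riemannian distance.

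Part \rmi\ then follows by integration along geodesics. Applying the bound above with $c=1$, any two points of $B_\eta(y\cdot o)$ with $\eta\leq 1$ and $y\cdot o\in B_2(o)^c$ are joined by a minimising geodesic of length at most $2\eta$ lying in $B_1(o)^c$, so that $\sup_{B_\eta(y\cdot o)}\kappa_0^\si\leq \e^{2 C_1\eta}\inf_{B_\eta(y\cdot o)}\kappa_0^\si$, which is $\leq(1+\vep)\inf_{B_\eta(y\cdot o)}\kappa_0^\si$ provided $\eta\leq \eta_0(\vep)$.

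For part \rmii\ the starting point is the identity $a_R u a=(a_R u a_R^{-1})(a_R a)$, which together with Lemma~\ref{l: Huc} gives
$$\bigmod{A^+(a_R u a)-A^+(a_R a)}\leq d(a_R u a_R^{-1}\cdot o,\,o)$$
\emph{uniformly in $a$}. For the fixed $R$, the right-hand side is continuous in $u\in K$ and vanishes at $u=e$, so by choosing $U$ small enough it can be made less than any prescribed $\delta$. By $K$-bi-invariance write $\kappa_0^\si(g\cdot o)=\psi(A^+(g))$ for a function $\psi$ smooth on $\overline{\fra^+}\setminus\{0\}$; the task then reduces to bounding $\psi(H_2)/\psi(H_1)$ when $\bigmod{H_1-H_2}\leq\delta$. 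I split according to $r:=\bigmod{A^+(a_R a)}$: in the outer regime $r\geq c$, for a fixed $c=c(\vep)$ to be chosen, the Lipschitz estimate on $\log\psi$ inherited from part \rmi\ (applied on $B_c(o)^c$, with constant $C_c$) gives $\psi(H_2)/\psi(H_1)\leq\e^{C_c\delta}\leq 1+\vep$ as soon as $\delta\leq \vep/(2C_c)$.

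The inner regime $r<c$ is the delicate one, since $\psi$ is then near its singularity. Writing $a=a_R^{-1}\exp\epsilon_1$ with $\epsilon_1\in\fra$ (so $\bigmod{\epsilon_1}=r$) and $u=\exp\epsilon_2$ with $\epsilon_2\in\frk$, and reducing to the case $\epsilon_2\perp\frm$ (the $\frm$-part of $\epsilon_2$ commutes with $a_R$ and leaves $A^+$ unchanged), the Baker--Campbell--Hausdorff formula together with the orthogonal decomposition $\frp=\fra\oplus\bigoplus_{\al\in\Sp}\frp_\al$ yields
$$\bigmod{a_R u a\cdot o}^2=\bigmod{\epsilon_1}^2+\sum_{\al\in\Sp}\sinh^2(\al(H_R))\,\bigmod{Y'_\al}^2+O\bigl((\bigmod{\epsilon_1}+\bigmod{\epsilon_2})^3\bigr),$$
where $Y'_\al\in\frp_\al$ is the $\frp$-part corresponding to the $\frk_\al$-piece of $\epsilon_2$. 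Orthogonality of the first two (squared) terms gives $\bigmod{a_R u a\cdot o}^2\geq\bigmod{\epsilon_1}^2(1-o(1))$ as $\epsilon_1,\epsilon_2\to 0$, and combining this with the Anker--Ji asymptotic $\kappa_0^\si(x\cdot o)\sim C_{n,\si}\bigmod{x\cdot o}^{-(n-\si)}$ near $o$ yields $\kappa_0^\si(a_R u a\cdot o)/\kappa_0^\si(a_R a\cdot o)\leq 1+\vep$ once $U$ is shrunk enough to absorb the cubic remainder. The main obstacle is precisely this near-singular regime: the bare Lipschitz bound on $\log\kappa_0^\si$ breaks down at $o$, and what saves the day is the geometric fact that a $K$-perturbation of $a\cdot o$ adds a $\frp$-component orthogonal to the radial direction $\fra$, pushing $a_R u a\cdot o$ away from $o$ rather than toward it.
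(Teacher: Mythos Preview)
Your proof of \rmi\ is correct and is essentially the paper's argument: the Anker--Ji bound $\bigmod{\nabla\kappa_0^\si}\le C\,\kappa_0^\si$ on $B_1(o)^c$ is exactly the Lipschitz bound on $\log\kappa_0^\si$ you use, and integrating along a geodesic is the mean value theorem step in the paper.

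For \rmii\ the paper takes a shorter route. It does \emph{not} split into regimes: after the identity $a_Rua=(a_Rua_R^{-1})(a_Ra)$ and Lemma~\ref{l: Huc}, it simply bounds $\bigmod{\Ad(a_R)X}\le \e^{\norm{\ad H_R}{}}\mod{X}$ for $u=\exp X$, so that $\exp A^+(a_Rua)\cdot o$ lies in the ball of radius $\e^{\norm{\ad H_R}{}}s$ about $a_Ra\cdot o$; taking $s$ small enough that this radius is $\le\eta_0$, part~\rmi\ is invoked directly. Your ``outer regime'' is exactly this argument.

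You are right, however, that the paper's reduction to \rmi\ tacitly needs $a_Ra\cdot o\in B_2(o)^c$, which can fail for $a\in\exp\frb_2^c$ (take $R>2$ and $H$ near $-H_R$). Your inner-regime analysis is an honest attempt to close this gap, but as written it is not complete: the remainder $O\big((\mod{\ep_1}+\mod{\ep_2})^3\big)$ is not $o(\mod{\ep_1}^2)$ uniformly as $\mod{\ep_1}\to 0$ with $U$ (hence $\mod{\ep_2}$) held fixed. The cure is to use the \emph{positive} quadratic contribution $\sum_\al\sinh^2\al(H_R)\,\mod{Y'_\al}^2\asymp_R\mod{\ep_2}^2$ to absorb the cubic term, choosing first $c$ and then shrinking $U$ (depending on $c$) so that $\mod{\ep_1}^2+c_R\mod{\ep_2}^2-C_R(\mod{\ep_1}+\mod{\ep_2})^3\ge(1-\vep')\mod{\ep_1}^2$. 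You should also note that in rank $\ge 2$ the kernel $\kappa_0^\si$ depends on $A^+(x)$ and not merely on $\mod{x\cdot o}$, so the near-origin asymptotic $\kappa_0^\si(x\cdot o)\sim C\mod{x\cdot o}^{\si-n}$ must be invoked with its uniformity in the angular variable made explicit.

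Finally, observe that in the only place the lemma is used (the proof of Theorem~\ref{t: notH^1L^1}), $a$ is restricted to $\exp(\frc_\de\cap\frb_{2R}^c)$, so $H_R+H$ stays well inside $\fra^+$ and $\mod{a_Ra\cdot o}>R$; the inner regime never arises, and the paper's shorter argument is adequate for the application.
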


\begin{proof}
First we prove \rmi.  Suppose that $x_1\cdot o, x_2\cdot o$ are points in $B_\eta(y\cdot o)$.  
By the mean value theorem  
$$
\begin{aligned}
\kappa_0^\si (x_1\cdot o) - \kappa_0^\si(x_2\cdot o)
& \leq 2 \eta \, \sup_{B_\eta(y\cdot o)} \bigmod{\nabla{\kappa_0^\si}}.  
\end{aligned} 
$$
By \cite[Thm~4.2.2]{AJ}, there exists a constant $C$ such that 
$$
\bigmod{\nabla{\kappa_0^\si(x\cdot o)}}  
\leq C\, \kappa_0^\si(x\cdot o)
\quant x\cdot o \notin B_1(o).  
$$
Therefore for all $y\cdot o$ in $B_2(o)^c$ 
$$
\begin{aligned}
\kappa_0^\si (x_2\cdot o) 
& \geq \kappa_0^\si(x_1\cdot o) - 2\eta \, \sup_{B_\eta(y\cdot o)} \bigmod{\nabla{\kappa_0^\si}} \\ 
& \geq \kappa_0^\si(x_1\cdot o) - 2 \, \eta \, C \, \sup_{B_\eta(y\cdot o)} \, \kappa_0^\si. 
\end{aligned}
$$
By taking the supremum over all $x_1$ in $B_\eta(y\cdot o)$, we obtain that 
$$
\kappa_0^\si (x_2\cdot o) 
\geq (1- 2 \, \eta\, C) \, \sup_{B_\eta(y\cdot o)} \, \kappa_0^\si. 
$$
Now, if $\eta< 1/(2C)$, then we may take the infimum of both sides over all 
$x_2$ in $B_\eta(y\cdot o)$, and obtain the required conclusion with 
$\eta_0 = \vep/2C(1+\vep)$.

To prove \rmii, write $a_Rua \cdot o = u^{a_R}a_Ra \cdot o$,
where $u^{a_R}$ is short for $a_Rua_R^{-1}$. 
By Lemma~\ref{l: Huc},  
$$
\begin{aligned} 
\bigmod{A^+(u^{a_R}a_Ra)-A^+(a_Ra)}
& \leq \wrt (u^{a_R}\cdot o,o) \\
& =    \wrt \big(\exp(\Ad (a_R) X)\cdot o,o\big) \\
& \leq \bigmod{\Ad (a_R) X}.  
\end{aligned}
$$
Here $X$ is in $\frk$, $\exp X = u$, and $\mod{X} \leq s$ with $s$ small, for we assume that $u$
belongs to a small neighbourhood of the origin in $K$.  
Notice that 
$
\Ad(a_R) X = \e^{\ad H_R} X 
$
so that 
$$
\bigmod{\Ad(a_R) X}
\leq \e^{\norm{\ad H_R}{}} \, \bigmod{X}
\leq \e^{\norm{\ad H_R}{}} \, s.   
$$  
Therefore $\exp\big(A^+(u^{a_R}a_Ra)\big)\cdot o$ lies in the ball with centre $a_Ra \cdot o$   
and radius $\e^{\norm{\ad H_R}{}} \, s$.  Assume that the latter quantity is smaller 
than $\eta_0$, i.e. that $s < \vep\, \e^{-\norm{H_R}{}}/2C(1+\vep)$.  Then \rmi\ implies that  
$$
\begin{aligned}
\kappa_0^\si(a_Rua \cdot o) 
& = \kappa_0^\si(\exp(A^+(u^{a_R}a_Ra) \cdot o) \\
& \leq (1+\vep)\,  \kappa_0^\si(a_Ra \cdot o)  
\quant u \in U \quant a \in \exp\frb_2^c,
\end{aligned}
$$
as required to conclude the proof of \rmii, and of the lemma.  
\end{proof}

\noindent
We now prove Theorem~\ref{t: notH^1L^1}.

\begin{proof}
Fix $\vep >0$.  Consider the function 
$
f  = b \, \One_{B_{\eta_0}(o)} -b \, \One_{B_{\eta_0}(a_R^{-1}\cdot o)},
$
where $\eta_0$ is as in Lemma~\ref{l: harnack like}~\rmi, 
$b = \mu\big(B_{\eta_0}(o)\big)^{-1}$, and $\One_E$ denotes the characteristic
function of~$E$.  
Clearly $f$ is in $L^2(\BX)$, its integral vanishes and 
its support is contained in $\OV{B_{R+1}(o)}$.  Then $f$ belongs to $\hu{\BX}$, 
by Remark~\ref{rem: economical dec}. 

We shall prove that if $R$ is large enough, then $\cL^{-\si/2}f$ is not in $\lu{\BX}$.  
We observe that this implies that the Riesz transform $\cR^1$ does not map 
$\hu{\BX}$ into $\lu{\BX;T^1}$.
Indeed, by Cheeger's inequality, there exists a positive constant~$c$ such that 
$$
\bignorm{\mod{\cR^1f}}{1}
\ge c\, \bignorm{\cL^{-1/2} f}{1},
$$
and the right hand side is infinite.

We continue the proof of the fact that $\cL^{-\si/2}f$ is not in $\lu{\BX}$.  
Observe that
$$
\begin{aligned}
f\ast \kappa_0^\si (x\cdot o)
& =  b \int_{B_{\eta_0}(o)} \kappa_0^\si(y^{-1}x\cdot o) \wrt\mu(y\cdot o) 
       - b  \int_{B_{\eta_0}(a_R^{-1}\cdot o)} \kappa_0^\si(y^{-1}x\cdot o) \wrt\mu(y\cdot o) \\  
& =  b \int_{B_{\eta_0}(o)} \big[\kappa_0^\si(y^{-1}x\cdot o) 
       - \kappa_0^\si(y^{-1}a_R x\cdot o)\big] \wrt\mu(y\cdot o) \\
& =  b \int_{B_{\eta_0}(o)} \big[\kappa_0^\si(x^{-1}y\cdot o) 
       - \kappa_0^\si(x^{-1}a_R^{-1} y\cdot o)\big] \wrt\mu(y\cdot o).  
\end{aligned}
$$
We have used the fact that $\kappa_0^\si(v^{-1}\cdot o) = \kappa_0^\si(v\cdot o)$ 
in the last equality.  
By Lemma~\ref{l: harnack like}~\rmi, the last integrand above is bounded from below by 
$$
\begin{aligned}
\frac{1}{1+\vep} \, \kappa_0^\si(x^{-1}\cdot o) 
         - (1+\vep)\, \kappa_0^\si(x^{-1}a_R^{-1} \cdot o) \\
& \hskip-.5truecm=    \frac{1}{1+\vep} \,  \kappa_0^\si(x\cdot o) \, \Big[1 \, - (1+\vep)^2\, 
         \frac{\kappa_0^\si(a_Rx \cdot o)}{\kappa_0^\si(x\cdot o)} \Big].  
\end{aligned}
$$
We have used again the fact that $\kappa_0^\si(v^{-1}\cdot o) = \kappa_0^\si(v\cdot o)$ 
in the last equality.  

We now restrict $x$ to $U\cdot \exp\big(\frc_\de\cap \frb_{2R}^c\big)$, where $U$
is a small neighbourhood of the identity in $K$ (as in Lemma~\ref{l: harnack like}~\rmii), and,
for $\de$ in $(0,1)$, 
$\frc_\de$ denotes the proper subcone of the positive Weyl chamber~$\fra^+$, defined by
\begin{align*}
\frc_\de
= \set{H\in\mathfrak{a}^+:  \rho(H)  \geq  \de \,\mod{\rho} \, \mod{H}}. 
\end{align*}
Then Lemma~\ref{l: harnack like}~\rmii\ implies that 
$\kappa_0^\si(a_Rx \cdot o) \leq (1+\vep) \,\kappa_0^\si(a_Ra \cdot o)$ for all such $x$, and we are
left with the problem of estimating 
$$
\frac{1}{1+\vep} \,  \kappa_0^\si(a\cdot o) \, \Big[1 \, - (1+\vep)^3\, 
         \frac{\kappa_0^\si(a_Ra \cdot o)}{\kappa_0^\si(a\cdot o)} \Big]  
\quant a \in \exp\big[\frc_\de\cap \frb_{2R}^c\big]
$$
from below.  

A straightforward consequence of \cite[Theor. 4.2.2]{AJ}, and of the sharp estimate of 
the spherical function $\vp_0$, is that for every $\de$ close to $1$,  
there exist positive constants~$c$ and $C$ such that 
\begin{equation} \label{f: spherical functions on cones}
c \, \mod{H}^{(\si-\ell-1)/2}\, \e^{-\rho(H)- \mod{\rho}\mod{H}} 
\leq \kappa_0^\si \big(u\exp H\cdot o\big)
\leq C \, \mod{H}^{(\si-\ell-1)/2}\, \e^{-\rho(H)- \mod{\rho}\mod{H}} 
\end{equation}  
for all $u$ in $K$ and for all $H$ in $\frc_\de\cap \frb_1^c$.  
Therefore
\begin{equation} \label{f: est ratio}
\begin{aligned}
\frac{\kappa_0^\si(a_Ra\cdot o)}{\kappa_0^\si(a\cdot o)} 
&\leq C\, \Big(\frac{\mod{H_R+H}}{\mod{H}}\Big)^{(\si-\ell-1)/2}
    \, \e^{-\rho(H_R+H) - \mod{\rho} \, \mod{H_R+H} + \rho(H) + \mod{\rho}\, \mod{H}}\\
&\leq C\, \Big(\frac{\mod{H_R+H}}{\mod{H}}\Big)^{(\si-\ell-1)/2}
    \, \e^{-\mod{\rho} (R + \mod{H_R+H} - \mod{H} )}.  
\end{aligned}
\end{equation}  
Observe that 
$$
\frac{1}{2} 
\leq \frac{\mod{H_R+H}}{\mod{H}} 
\leq \frac{3}{2}
\quant H \in \frc_\de\cap \frb_{2R}^c,
$$
and that the exponential on the right hand
side of \eqref{f: est ratio} is dominated by $\e^{-\mod{\rho} R}$.  
By choosing $R$ large enough, we may conclude that 
$$
\frac{\kappa_0^\si(a_Ra\cdot o)}{\kappa_0^\si(a\cdot o)} 
< \vep 
\quant a \in \exp\big[\frc_\de\cap \frb_{2R}^c\big]. 
$$
Altogether, we have proved that for $R$ large enough
$$
\begin{aligned}
\int_{U\cdot \exp(\frc_{\de}\cap\frb_{2R}^c)} \mod{f\ast\kappa_0^\si} \wrt\mu
& \geq b \,\, \frac{1-(1+\vep)^3\vep}{1+\vep} \, \int_{U\cdot \exp(\frc_{\de}\cap\frb_{2R}^c)} 
    \kappa_0^\si  \wrt \mu.  
\end{aligned}
$$
It is not hard to prove that the last integral is equal to infinity. 
Indeed, integrate in Cartan co-ordinates, use the fact that $\kappa_0^\si$
is $K$--bi-invariant, and obtain that the last integral is equal to 
$$
\mu_K(U) \, \int_{\frc_{\de}\cap\frb_{2R}^c} \kappa_0^\si(\exp H \cdot o) \, \de(H) \wrt H
\geq c \,\mu_K(U) \, \int_{\frc_{\de}\cap\frb_{2R}^c} \mod{H}^{(\si-\ell-1)/2}\, 
   \e^{\rho(H)- \mod{\rho}\mod{H}} \wrt H.  
$$
In the last inequality we have used the fact that $\de(H)\ge c\,\e^{2\rho(H)}$ 
for some positive constant $c$ when $H$ is in $\frc_{\de}$ (see \eqref{delta}).
If the rank $\ell$ is equal to one, then $\rho(H) = \mod{\rho}\, \mod{H}$, 
$\frc_{\de}\cap\frb_r^c$ reduces to the half line $[r,\infty)$, 
and the integrand becomes $\mod{H}^{-1+\si/2}$, which is nonintegrable on $[r,\infty)$.  
If $\ell \geq 2$, then we  pass to polar co-ordinates in $\fra$ and see that 
the last integral is equal to
$$
c\int_0^{\arccos \de} \! \wrt\theta\, (\sin\theta)^{\ell-2} 
\int_{r}^\infty s^{(\si+\ell-3)/2}\, \e^{\mod{\rho}\, (\cos\theta-1)s} \wrt s,
$$ 
which is easily seen to diverge for all $\si\ge0$. 

This concludes the proof that $\cL^{-\si/2}f$ is not in $\lu{\BX}$.  
\end{proof}

\end{document}